\newcommand{\citecomment}[2][]{\citen{#2}#1\citevar}
\newcommand{\citeone}[1]{\citecomment{#1}}
\newcommand{\citetwo}[2][]{\citecomment[,~#1]{#2}}
\newcommand{\citevar}{\@ifnextchar\bgroup{;~\citeone}{\@ifnextchar[{;~\citetwo}{]}}}
\newcommand{\citefirst}{\@ifnextchar\bgroup{\citeone}{\@ifnextchar[{\citetwo}{]}}}
\DeclarePairedDelimiter{\abs}{\lvert}{\rvert}
\providecommand{\norm}[1]{\lVert#1\rVert_2}
\newcommand*{\dif}{\mathop{}\!\mathrm{d}}
\newcommand*{\Tra}[1]{{#1}^{\mathsf{T}}}
\newcommand*{\complns}{\mathbb{C}} 
\DeclarePairedDelimiter{\opfences}{(}{)}
\newcommand*{\real}{\operatorname{Re}\opfences}
\DeclarePairedDelimiter{\opbrakets}{\{}{\}}
\newcommand*{\Lp}{\operatorname{\mathcal{L}}\opbrakets}
\newcommand{\Cn}{\mathbb{C}^{n}}
\newcommand{\Cnn}{\mathbb{C}^{n \times n}}
\newcommand{\Cm}{\mathbb{C}^{m}}
\newcommand{\Cmm}{\mathbb{C}^{m \times m}}
\newcommand{\Cnm}{\mathbb{C}^{n \times m}}
\newcommand{\Kry}{{\cal K}}
\DeclareMathOperator{\Span}{span}
\DeclareMathOperator{\spec}{spec}
\pgfplotsset{compat=1.3}
\definecolor{mycolor1}{rgb}{0.00000,0.44700,0.74100} 
\definecolor{mycolor2}{rgb}{0.85000,0.32500,0.09800} 
\definecolor{mycolor3}{rgb}{0.92900,0.69400,0.12500} 
\newcommand*{\titleforLaplace}{Laplace op.\ $A_\text{L}$ ($m = 50$)}
\newcommand*{\titleforConvdiff}{conv.-diff.\ op.\ $A_\text{CD}$ ($m=20$)}
\newcommand*{\titleforLaplacewom}{Laplace op.\ $A_\text{L}$}
\newcommand*{\titleforConvdiffwom}{conv.-diff.\ op.\ $A_\text{CD}$}
\newcommand{\corrected}[1]{#1}
\pgfplotsset{laplace1/.style={
    color=mycolor1, 
    line width=1.5pt, 
    mark=o, 
    mark options={solid, mycolor1}} }
\pgfplotsset{laplace2/.style={
    color=mycolor1, 
    line width=1.5pt,
    mark=o, 
    only marks, 
    mark options={solid, mycolor1}} }
\pgfplotsset{laplace3/.style={
    color=mycolor1,  
    line width=1.5pt} }
\pgfplotsset{stieltjes1/.style={
    color=mycolor2, 
    line width=1.5pt, 
    dashed, 
    mark=square, 
    mark options={solid, mycolor2}} }
\pgfplotsset{stieltjes2/.style={
    color=mycolor2, 
    line width=1.5pt,
    mark=square, 
    only marks, 
    mark options={solid, mycolor2}} }
\pgfplotsset{stieltjes3/.style={
    color=mycolor2,  
    line width=1.5pt,
    dashed} }
\pgfplotsset{tolline/.style={
    color=black,  
    line width=1.5pt,
    dotted} }
\pgfplotsset{twopass1/.style={
    color=mycolor3, 
    line width=1.5pt, 
    dashdotted,
    mark=diamond, 
    mark options={solid, mycolor3}} } 
\pgfplotsset{twopass2/.style={
    color=mycolor3, 
    line width=1.5pt,
    mark=diamond, 
    only marks, 
    mark options={solid, mycolor3}} }
\pgfplotsset{twopass3/.style={
    color=mycolor3,  
    line width=1.5pt,
    dashdotted} }
\title{Krylov subspace restarting for matrix Laplace transforms}
\author{%
A.\ Frommer\thanks{School of Mathematics and Natural Sciences, Bergische Universit\"at Wuppertal, 42097 Wuppertal, Germany, \email{\{frommer,kkahl,marcel,tsolakis\}@uni-wuppertal.de}}
\and
K.\ Kahl${}^\ast$\!\!\!
\and
M.\ Schweitzer${}^\ast$\!\!\!
\and
M.\ Tsolakis${}^\ast$ 
}
\begin{document}

\maketitle\pagestyle{myheadings} \thispagestyle{plain}
\markboth{A.~FROMMER, K.~KAHL, M.~SCHWEITZER, AND M.~TSOLAKIS}{KRYLOV SUBSPACE RESTARTING FOR MATRIX LAPLACE TRANSFORMS}

\begin{abstract}
A common way to approximate $F(A)b$---the action of a matrix function on a vector---is to use the Arnoldi approximation. Since a new vector needs to be generated and stored in every iteration, one is often forced to rely on restart algorithms which are either not efficient, not stable or only applicable to restricted classes of functions. 
We present a new representation of the error of the Arnoldi iterates if the function $F$ is given as a Laplace transform. Based on this representation we build an efficient and stable restart algorithm. In doing so we extend earlier work for the class of Stieltjes functions which are special Laplace transforms.
We report several numerical experiments including comparisons with the restart method for Stieltjes functions.
\end{abstract}

\begin{keywords}
matrix functions, Krylov subspace methods, restarted Arnoldi method, Laplace transform, quadrature
\end{keywords}

\begin{AMS}
65F60, 
65F50, 
44A10, 
65D30, 
65D07  
\end{AMS}

{\vspace{.05in}\footnotesize
\parindent .2in{\upshape\bfseries MATLAB package }\ignorespaces 
available at \url{https://github.com/MaTso7/laplace_restarting}
\par\vspace{.1in}}

\section{Introduction}
Computing $F(A)b$, the action of a matrix function $F(A) \in \Cnn$ on a vector $b \in \Cn$, is an important task in many scientific computing applications, including exponential integrators for differential equations~\cite{HochbruckOstermann2010}, network analysis~\cite{BenziBoito2020}, theoretical particle physics~\cite{VanDenEshofFrommerLippertSchillingVanDerVorst2002}, machine learning~\cite{PleissJankowiakErikssonDamleGardner2020} and many others. In these applications, the matrix $A \in \Cnn$ is typically very large and sparse, so that explicitly forming $F(A)$---which is a dense matrix in general, irrespective of the sparsity of $A$---is not feasible with regard to both  complexity and memory requirements. Therefore, one has to resort to \emph{iterative methods} that directly approximate the vector $F(A)b$. The most widely used classes of algorithms for this task are polynomial~\cite{Saad1992,DruskinKnizhnerman1989} and rational~\cite{Guettel2010,DruskinKnizhnerman1998,vdEH06,MasseiRobol2021} Krylov subspace methods. 

While rational Krylov methods can greatly outperform polynomial methods when they are applicable (e.g., when shifted linear systems with $A$ can be efficiently solved by a sparse direct solver), there are situations in which polynomial methods are superior: The size and sparsity pattern of $A$ might make the direct solution of shifted linear systems infeasible, or $A$ might only be implicitly available through a routine that returns the result of the matrix-vector product. In these cases, combining an ``outer'' rational Krylov method with an ``inner'' polynomial iterative solver for linear systems is in general not advisable (unless a \emph{very} efficient preconditioner is available); see~\cite{GuettelSchweitzer2021}. Thus---despite the tremendous theoretical and algorithmic advances in the area of rational Krylov methods in recent years---polynomial Krylov methods are still of utmost importance, in particular for large-scale applications. 

However, for very large matrix sizes $n$, polynomial Krylov methods present challenges on their own: 
Their backbone is the Arnoldi process~\cite{Arnoldi1951} (which reduces to the short-recurrence Lanczos method~\cite{Lanczos1950} when $A$ is Hermitian), which computes a nested orthonormal basis $v_1,\dots,v_m$ of the Krylov subspace
$$\Kry_m(A,b) \coloneqq \Span\{b, Ab, \dots, A^{m-1}b\}.$$
Collecting the basis vectors in $V_m = [v_1\mid\dots\mid v_m] \in \Cnm$ and the coefficients from the orthogonalization process in an upper Hessenberg matrix $H_m \in \Cmm$, one has the \emph{Arnoldi relation}
\begin{equation}\label{eq:arnoldi_relation}
    AV_m = V_mH_m + h_{m+1,m}v_{m+1}e_m^T,
\end{equation}
where $e_m \in \Cm$ denotes the $m$th canonical unit vector. Given~\eqref{eq:arnoldi_relation}, one obtains the \emph{Arnoldi approximation} $f_m \in \Kry_m(A,b)$ for $F(A)b$ by projecting the original problem onto the Krylov space, i.e.,
\begin{equation}\label{eq:arnoldi_approximation}
    f_m \coloneqq V_mF(V_m^HAV_m)V_m^Hb = \|b\|_2V_mF(H_m)e_1,
\end{equation}
where $\|\cdot\|_2$ denotes the Euclidean norm and $e_1 \in \Cm$ is the first canonical unit vector. A large computational burden associated with forming the approximation~\eqref{eq:arnoldi_approximation} is that it requires storing all vectors of the orthonormal basis $v_1,\dots,v_m$. This can quickly surpass the available memory if $n$ is very large. This is also the case when $A$ is Hermitian, in contrast to the situation for linear systems, where the short recurrence for the basis vectors translates into a short recurrence for the iterates in, for example, the conjugate gradient method~\cite{HestenesStiefel1952}. Thus, without appropriate countermeasures, the approximation accuracy that is reachable by a Krylov method might be limited by the available memory. Additionally, when $A$ is non-Hermitian, the orthogonalization of basis vectors becomes more and more expensive with growing $m$, rendering Krylov methods much less efficient when a large number of iterations is required.

A remedy 
is \emph{restarting}: After a fixed (small) number $m_{\max}$ of iterations, the Arnoldi approximation $f_{m_{\max}}$ is formed. Then, the matrices $V_m$ and $H_m$ are discarded, and a new Arnoldi process is started for approximating the remaining \emph{error}
\begin{equation}\label{eq:error_general}
    \varepsilon_{m_{\max}} \coloneqq F(A)b - f_{m_{\max}}.
\end{equation}
Clearly, approximating~\eqref{eq:error_general} by the Arnoldi method 
requires that it can again be written as the action of a matrix 
function on a vector. While this is evidently the case when solving 
linear systems (i.e., for $F(s) = s^{-1}$), where the error fulfills 
the residual equation $A\varepsilon_{m_{\max}} = r_{m_{\max}}$ with 
$r_{m_{\max}} = b-Af_{m_{\max}}$, the situation is much more difficult 
for general functions $F$. Over the last fifteen years, numerous 
publications have investigated how to transfer the restarting approach 
to general matrix functions~\cite{FrommerGuettelSchweitzer2014a,Schweitzer2016,TalEzer2007,EiermannErnst2006,AfanasjewEtAl2008a,AfanasjewEtAl2008b,IlicTurnerSimpson2010}. All of these approaches have in common 
that they are either only applicable to certain classes of functions 
(i.e., they are not fully general) or they have shortcomings with 
regard to numerical stability or computational efficiency (e.g., the 
required work grows from one restart cycle to the next). In this work, 
we add to the current state of the art in restarted Krylov methods by 
explaining---both theoretically and algorithmically---how restarts are 
possible when $F$ results from the Laplace transform of some function 
$f$. This covers a large class of practically relevant functions and 
thus significantly extends the scope in which restarting is possible 
in a numerically efficient and stable way. While the approach based on 
the Cauchy integral formula presented 
in~\cite{FrommerGuettelSchweitzer2014a} is in theory applicable to any 
analytic function, and thus also to Laplace transforms, its practical 
implementation requires choosing a suitable contour $\Gamma$ in the 
complex plane and a corresponding quadrature rule. When choosing 
these, one needs knowledge on the position of 
the spectrum of $A$ in the complex plane 
and has to take specifics of the function $F$ to be approximated into 
account as otherwise the quadrature rule on $\Gamma$ might converge 
very slowly or become unstable. 
In contrast, the method we propose here works for general Laplace 
transforms, only employing simple, general purpose Gauss-Kronrod 
quadrature rules on the positive real axis. The only requirement is 
that the field of values lies within the region of absolute 
convergence of the Laplace transform, the typical case being that $A$ 
is positive definite (not necessarily Hermitian) in the sense that
    $\real{x^H A x} > 0$ for $x \neq 0$,
and the Laplace transform converges absolutely in the right half plane.

The remainder of this paper is organized as follows. Important basic material is covered in \cref{sec:basics}. In particular, we review the related Arnoldi restarting approach for Stieltjes matrix functions, and we collect some basic facts about the Laplace transform. In \cref{sec:laplace_theory}, we lay the necessary theoretical foundations for restarting for matrix Laplace transforms and explain how it relates to other restart approaches. We explain in \cref{sec:extensions} how the method can be extended to two related classes of functions.
\Cref{sec:laplace_implementation} is devoted to several 
implementation aspects that are crucial for making the method feasible in practice. In \cref{sec:experiments}, we illustrate the performance of our method and compare it to existing alternatives in a series of numerical experiments on both academic and real-world benchmark problems. Concluding remarks are given in \cref{sec:conclusions}.

\section{Basics}\label{sec:basics}
In this section, we review some basic material on which we build in later sections.

\subsection{The restarted Arnoldi method for Stieltjes functions}\label{subsec:restart_stieltjes}
We start by going into the details of the quadrature-based restarting approach for Stieltjes matrix functions from~\cite{FrommerGuettelSchweitzer2014a,Schweitzer2016}, as it is closely related to the present work; see also \Cref{cor:error_laplace_to_stieltjes} below.

Given a maximum Krylov subspace size $m$ (dictated, e.g., by the available memory), the basic idea of the restarted Arnoldi method for $F(A)b$ is to compute a sequence of (hopefully) more and more accurate approximations $f_m^{(1)}, f_m^{(2)}, f_m^{(3)}, \dots$, where $f_m^{(1)} = f_m = \|b\|_2V_mF(H_m)e_1$ is the usual $m$-step Arnoldi approximation~\eqref{eq:arnoldi_approximation} and further iterates (belonging to the $k$th \emph{Arnoldi cycle}) are defined via the recurrence 
\begin{equation}\label{eq:restarted_arnoldi_recurrence}
    f_m^{(k+1)} = f_m^{(k)} + d_m^{(k)}, \quad k \geq 1
\end{equation}
where $d_m^{(k)}$ is the $m$-step Arnoldi approximation for the error $\varepsilon_m^{(k)} = F(A)b - f_m^{(k)}$. Clearly, the preceding statement only makes sense if $\varepsilon_m^{(k)} = F^{(k+1)}(A)b^{(k+1)}$ is again the action of a matrix function $F^{(k+1)}(A)$ on a \corrected{normalized} vector $b^{(k+1)}$. Note that in general $F^{(k)} \neq F$, and, as it turns out, in all known approaches one finds $b^{(k)} = v_{m+1}^{(k-1)}$, the $(m+1)$st Arnoldi vector of the previous cycle. A generic algorithmic description is given in \cref{alg:generic_arnoldi}.
\begin{algorithm}
\caption{Restarted Arnoldi method for $F(A)b$ from \cite{EiermannErnst2006} as presented in \cite{FrommerGuettelSchweitzer2014a}.}\label{alg:generic_arnoldi}
\begin{algorithmic}[1]
\State Compute the Arnoldi decomposition $AV_m^{(1)} = V_m^{(1)}H_m^{(1)} + h_{m+1,m}^{(1)}v_{m+1}^{(1)}\Tra{e}_m$ \Statex\hspace{\algorithmicindent}with respect to $A$ and $b$.
\State Set $f_m^{(1)} = \norm{b}V_m^{(1)}F(H_m^{(1)})e_1$.
\For{$k = 2, 3, \dots$ until convergence}
    \State Determine the error function $F^{(k)}$ s.t.\ $\varepsilon^{(k-1)} = F^{(k)}(A)v^{(k-1)}_{m+1}$.
    \State Compute the Arnoldi decomposition $AV_m^{(k)} = V_m^{(k)}H_m^{(k)} + h_{m+1,m}^{(k)}v_{m+1}^{(k)}\Tra{e}_m$ \Statex\hspace{\algorithmicindent}\hspace{\algorithmicindent}with respect to $A$ and $v_{m+1}^{(k-1)}$.
    \State Set $f_m^{(k)} = f_m^{(k-1)} + \norm{b}V_m^{(k)}F^{(k)}(H_m^{(k)})e_1$.
\EndFor
\end{algorithmic}
\end{algorithm}

In early work on restarting for matrix functions, $F^{(k)}, k > 1$ is characterized in terms of divided differences of $F$. 
\corrected{We have
\[
F^{(2)}(s) = \| b\|_2  \prod_{i = 1}^m h_{i+1,i}  [D_{w_m}F](s),
\]
where $[D_{w_m}F]$ denotes the $m$-th divided difference of $F$ with respect to interpolation nodes which are the eigenvalues of $H_m^{(1)}$; see~\cite{EiermannErnst2006,TalEzer2007,IlicTurnerSimpson2010}. 
This expression can be iterated to obtain (iterated) divided difference representations for the error functions $F^{(k)}(t), k >2$.
The divided difference representations might be helpful to keep in mind when following the theory developed in this paper. They are, however, only marginally useful when it comes to computation, since they suffer from numerical instability.   
}

\corrected{This is why} error function representations for rational functions in partial fraction form~\cite{AfanasjewEtAl2008a} and integral representations for the error based on Cauchy's integral formula for analytic functions~\cite{FrommerGuettelSchweitzer2014a} 
were developed \corrected{as an alternative}.
These representations significantly improve numerical stability and robustness compared to earlier representations.

The following result from~\cite{FrommerGuettelSchweitzer2014a} is valid for \emph{Stieltjes functions}, i.e., functions of the form
\corrected{
\begin{equation*}
    F(s) = \int_0^\infty \frac{1}{t+s} \dif{\mu(t)},
\end{equation*}
where $\mu$ is a nonnegative measure on the positive real axis such that $\int_0^\infty \frac{1}{t+1} \dif{\mu(t)} < \infty$. For many practically relevant Stieltjes functions, one has $\dif\mu(t) = \rho(t)\dif t$, i.e.,
\begin{equation}\label{eq:stieltjes_function}
    F(s) = \int_0^\infty \frac{\rho(t)}{t+s} \dif{t},
\end{equation}
where $\rho(t) 
\geq 0$ is a piecewise continuous function on $(0, \infty)$. 
}
Important examples of Stieltjes functions are $F(s) = s^{-\alpha}, \alpha \in (0, 1)$ and $F(s) = \log(1+s)/s$.

\begin{theorem}[adapted from Theorem~3.4 and Corollary~3.5 in~\cite{FrommerGuettelSchweitzer2014a}]\label{thm:stieltjes_error}
Let $F$ be a function of the form~\eqref{eq:stieltjes_function}, assume that $\spec(A) \cap (-\infty,0] = \emptyset$, denote by $f_m^{(k)}$ the restarted Arnoldi approximation for $F(A)b$ from the $k$th Arnoldi cycle and let $H_m^{(j)}, V_m^{(j)}, j = 1,\dots,k$ be the Hessenberg matrix and orthonormal basis from the $j$th Arnoldi cycle. 
Let $\psi_m^{(j)}(t) = \Tra{e}_m (H_m^{(j)} + tI)^{-1}e_1$.
Then\footnote{Note that the result as given in~\cite{FrommerGuettelSchweitzer2014a} has an erroneous factor of $-1$.}
\begin{eqnarray*}\label{eq:error_function_stieltjes}
    F(A)b-f_m^{(k)} &=& (-1)^{k} \big( \prod_{j=1}^k h_{m+1,m}^{(j)} \big) \norm{b} \int_0^\infty \rho(t) \big(\prod_{j=1}^k\psi_m^{(j)}(t) \big) (A+tI)^{-1} v_{m+1}^{(k)} \dif{t} \\
     &\eqqcolon& F^{(k+1)}(A)v_{m+1}^{(k)}.
\end{eqnarray*}
\end{theorem}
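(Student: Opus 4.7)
The plan is to prove the formula by induction on the cycle number $k$, exploiting the fact that, for a Stieltjes function, applying $F$ to $A$ is just a parametric integral of shifted inverses, so the whole analysis reduces to the well-understood error of the Arnoldi (FOM) approximation for a family of shifted linear systems $(A+tI)x = v$.

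For the base case $k=1$, I would write $F(A)b = \int_0^\infty \rho(t)(A+tI)^{-1}b\,\dif t$ and correspondingly, using $F(H_m^{(1)}) = \int_0^\infty \rho(t)(H_m^{(1)}+tI)^{-1}\dif t$, obtain $f_m^{(1)} = \int_0^\infty \rho(t)\|b\|V_m^{(1)}(H_m^{(1)}+tI)^{-1}e_1\,\dif t$. Subtracting integrand-wise, the key step is the classical FOM residual identity for shifted systems: from the Arnoldi relation one gets $(A+tI)V_m^{(1)} = V_m^{(1)}(H_m^{(1)}+tI) + h_{m+1,m}^{(1)}v_{m+1}^{(1)}\Tra{e}_m$, and because $b = \|b\|V_m^{(1)}e_1$, a direct computation gives
\begin{equation*}
  (A+tI)^{-1}b - \|b\|V_m^{(1)}(H_m^{(1)}+tI)^{-1}e_1 = -h_{m+1,m}^{(1)}\|b\|\,\psi_m^{(1)}(t)\,(A+tI)^{-1}v_{m+1}^{(1)}.
\end{equation*}
Integrating against $\rho(t)$ yields exactly the claim for $k=1$.

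For the inductive step, assume the claim holds for $k$. Then $F^{(k+1)}$ has itself an integral representation of Stieltjes type, with density
\begin{equation*}
  \tilde\rho_{k+1}(t) \coloneqq (-1)^{k}\Bigl(\prod_{j=1}^k h_{m+1,m}^{(j)}\Bigr)\|b\|\,\rho(t)\prod_{j=1}^k\psi_m^{(j)}(t),
\end{equation*}
applied to the \emph{unit} vector $v_{m+1}^{(k)}$. In cycle $k+1$, Arnoldi is run with starting vector $v_{m+1}^{(k)}$, yielding $V_m^{(k+1)}, H_m^{(k+1)}, v_{m+1}^{(k+1)}$, and the correction $d_m^{(k+1)} = V_m^{(k+1)}F^{(k+1)}(H_m^{(k+1)})e_1$. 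Applying exactly the same shifted-system residual identity as in the base case, but with $b$ replaced by the unit vector $v_{m+1}^{(k)}$ (so no factor $\|b\|$ appears from this step), gives
\begin{equation*}
  (A+tI)^{-1}v_{m+1}^{(k)} - V_m^{(k+1)}(H_m^{(k+1)}+tI)^{-1}e_1 = -h_{m+1,m}^{(k+1)}\psi_m^{(k+1)}(t)(A+tI)^{-1}v_{m+1}^{(k+1)}.
\end{equation*}
Integrating against $\tilde\rho_{k+1}(t)$ absorbs the extra factor of $-h_{m+1,m}^{(k+1)}\psi_m^{(k+1)}(t)$ into the product, flipping the sign and extending both products to include $j=k+1$, which is precisely the claim for $k+1$.

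The main obstacle is not algebraic but analytic: justifying the exchange of the integral with the application of matrix functions, and more importantly verifying that the parametric integrals actually converge. This requires using the assumption $\spec(A)\cap(-\infty,0]=\emptyset$ (so that $(A+tI)^{-1}$ is well defined and bounded uniformly in $t\ge 0$), together with the behaviour of $\psi_m^{(j)}(t)$ as $t\to\infty$ (which decays like $1/t$) and near $t=0$ (where the Stieltjes assumption on $\rho$ guarantees integrability). Once these are in place, the induction outlined above is a clean reduction to the single FOM residual formula for shifted systems, which is the only nontrivial identity actually used.
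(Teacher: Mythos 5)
Your proof is correct, but it follows a genuinely different route from the one this paper gives. The paper deliberately does \emph{not} prove \cref{thm:stieltjes_error} directly; it derives it in \cref{cor:error_laplace_to_stieltjes} as a special case of the Laplace-transform error representation of \cref{thm:error_laplace} and \cref{cor:error_laplace_k}, writing the Stieltjes function as a double Laplace transform $F = \Lp{\Lp{\rho}}$, proving by induction that $f^{(k+1)} = \Lp{\rho(t)\prod_{j=1}^{k}\psi_m^{(j)}(t)}$ via the convolution-type identity $\int_s^\infty f^{(k)}(t)g^{(k)}(t-s)\dif{t} = \Lp{\mathcal{L}^{-1}\{f^{(k)}\}\Lp{g^{(k)}}}(s)$ together with $\Lp{g^{(k)}} = \psi_m^{(k)}$. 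Your argument instead is the direct one: induction over restart cycles, where the only nontrivial ingredient is the shifted-FOM residual identity $(A+tI)^{-1}v - V_m(H_m+tI)^{-1}e_1 = -h_{m+1,m}\psi_m(t)(A+tI)^{-1}v_{m+1}$, integrated against the current density. This is essentially the original proof of \cite{FrommerGuettelSchweitzer2014a}, to which the paper explicitly offers its derivation as an alternative. Your route is more elementary and self-contained (no Laplace-transform calculus, no matrix exponential); the paper's route buys the insight that the Stieltjes machinery is strictly subsumed by the new Laplace framework, which is the whole point of the corollary. Two small remarks on your analytic discussion: $\psi_m^{(j)}(t)$ in fact decays like $t^{-m}$ (its numerator is the fixed product $\prod_i h_{i+1,i}$ divided by the characteristic polynomial of $-H_m^{(j)}$ evaluated at $t$), which is more than enough for convergence; and the well-definedness of $\psi_m^{(j)}(t)$ for all $t\ge 0$ tacitly requires $\spec(H_m^{(j)})\cap(-\infty,0]=\emptyset$, which for non-normal $A$ does not follow from $\spec(A)\cap(-\infty,0]=\emptyset$ alone but from a field-of-values condition --- an issue inherited from the cited theorem rather than introduced by you.
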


\corrected{We will present an alternative proof to the one from \cite{FrommerGuettelSchweitzer2014a} for \cref{thm:stieltjes_error} later in \cref{cor:error_laplace_to_stieltjes}.}
Based on Theorem~\ref{thm:stieltjes_error}, a convergence analysis of the restarted Arnoldi process for Stieltjes functions was presented in \cite{FrommerGuettelSchweitzer2014b}. From a computational perspective, the error representation given in \cref{thm:stieltjes_error} has the shortcoming that it cannot be evaluated by a closed formula. In~\cite{FrommerGuettelSchweitzer2014a}, it is proposed to approximately evaluate $F^{(k)}$ by adaptive numerical quadrature. The crucial observation in this context is that one only needs to evaluate $F^{(k)}$ at the (small) Hessenberg matrix $H_m^{(k+1)}$, and not at $A$ itself. Thus, when implemented carefully, the computational cost of this quadrature rule is negligible compared to matrix-vector products with $A$ and orthogonalization costs. For an overview of several different, suitable quadrature rules tailored for specific practically relevant Stieltjes functions, we refer to~\cite[Section~4]{FrommerGuettelSchweitzer2014a}.

\begin{remark}\label{rem:rt_restarting}
There is another, conceptually different way of performing restarts in the Arnoldi method, the so-called ``residual-time restarting'' developed in~\cite{BotchevKnizhnerman2020,BotchevKnizhnermanTyrtyshnikov2020}, based on~\cite{BotchevGrimmHochbruck2013}. This approach is limited to functions like the exponential and $\varphi$-functions since it exploits the connection to an underlying ordinary differential equation. Instead of trying to approximate an \emph{error function}, as outlined above, it is based on a \emph{time-stepping approach}, where each restart cycle propagates the iterate from the previous cycle forward in time, until the desired point is reached.
\end{remark}

\subsection{The Laplace transform}\label{subsec:laplace_transform}
The function given by the integral
\begin{equation}\label{eq:laplace_transform}
    \mathcal{L}_t\{f(t)\}(s) = \int_0^\infty f(t) \exp(-st) \dif{t}
\end{equation}
is called the \emph{Laplace transform of $f$}. Whenever the integration variable is clear, we will simply write $\Lp{f}(s)$. 
In this section we summarize the most important properties of Laplace transforms needed in this paper. 

The set of all values $s\in\complns$ for which $\Lp{f}(s)$ converges (absolutely) is called the \emph{region of (absolute) convergence}. It generally has the shape of a half plane:
\begin{theorem}[{\hspace{1sp}\cite[Theorem 3.1]{Doetsch}}] \label{thm:laplace_absolute_convergence}
    If \cref{eq:laplace_transform} converges absolutely at $s_0$, then it converges absolutely in the closed right half-plane $\real{s} \geq \real{s_0}$.
\end{theorem}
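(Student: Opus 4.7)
The plan is to reduce the claim to a pointwise bound on the integrand followed by a comparison test for improper integrals. The key observation is that the modulus of the exponential factor depends only on the real part of the complex parameter, so on the integration domain $[0,\infty)$ one has
\[
\abs{\exp(-st)} = \exp(-\real{s}\, t),
\]
which is a monotonically decreasing function of $\real{s}$ for every fixed $t \geq 0$.

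From this, the first step is to fix an arbitrary $s\in\complns$ with $\real{s}\geq\real{s_0}$ and observe that for all $t\geq 0$,
\[
\abs{f(t)\exp(-st)} = \abs{f(t)}\exp(-\real{s}\,t) \leq \abs{f(t)}\exp(-\real{s_0}\,t) = \abs{f(t)\exp(-s_0 t)}.
\]
The second step is to invoke the hypothesis that $\Lp{f}(s_0)$ converges absolutely, i.e., $\int_0^\infty \abs{f(t)\exp(-s_0 t)} \dif t < \infty$, and apply the standard comparison test (dominated convergence for improper Riemann integrals, applied on truncated intervals $[0,R]$ and passing to the limit $R\to\infty$) to conclude that $\int_0^\infty \abs{f(t)\exp(-s t)} \dif t$ is finite as well. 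This gives absolute convergence of $\Lp{f}(s)$ at every such $s$.

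There is essentially no obstacle: the argument is a one-line majorization plus monotone/dominated convergence. The only subtlety worth spelling out is that the inequality for the moduli uses $\real{s}\geq\real{s_0}$ together with $t\geq 0$ in an essential way—on a two-sided Laplace-type integral the analogous result would fail—so the plan should emphasize that the half-line domain of integration is what forces the region of absolute convergence to be a closed right half-plane rather than a more general set.
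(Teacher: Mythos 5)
Your proof is correct: the pointwise majorization $\abs{f(t)\exp(-st)} \leq \abs{f(t)\exp(-s_0 t)}$ for $t\geq 0$ and $\real{s}\geq\real{s_0}$, combined with the comparison test, is exactly the standard argument, and it is the one given in the reference (Doetsch) that the paper cites for this result without reproducing a proof. Your closing remark about the half-line domain being essential is a correct and worthwhile observation.
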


The smallest value $\alpha$ such that $\Lp{f}(s)$ converges absolutely for  $\real{s} > \alpha$ is called the \emph{abscissa of absolute convergence}. In the case of simple convergence, we obtain a (possibly open) right half-plane $\real{s} > \alpha_0$ with $\alpha_0 \leq \alpha$ being called the abscissa of convergence.  However, only considering absolute convergence is not a restriction in the sense of the following theorem:
\begin{theorem}[{\hspace{1sp}\cite[Theorem 3.4]{Doetsch}}]
    If \cref{eq:laplace_transform} converges for $s_0$, then it converges in the open half-plane $\real{s} > \real{s_0}$, where it can be expressed by the absolutely converging Laplace transform
    \begin{equation*}
        \Lp{f}(s) = (s-s_0) \Lp{\phi}(s-s_0)
    \end{equation*}
    with
    \begin{equation*}
        \phi(t) = \int_0^t \exp(-s_0\tau) f(\tau) \dif{t}.
    \end{equation*}
\end{theorem}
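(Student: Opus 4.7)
The plan is to reduce ordinary convergence to absolute convergence via integration by parts, using $\phi$ as a (smooth, bounded) antiderivative of $\exp(-s_0 t)f(t)$. First I would observe that the convergence of $\Lp{f}$ at $s_0$ is, by definition, precisely the statement that $\phi(T) = \int_0^T \exp(-s_0\tau) f(\tau) \dif\tau$ has a finite limit as $T \to \infty$. In particular $\phi$ is continuous on $[0,\infty)$, of bounded variation on compact subintervals, bounded by some constant $M$, and satisfies $\phi(0)=0$.

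Next, for a fixed $s$ with $\real{s} > \real{s_0}$, I would rewrite the truncated Laplace integral as
\begin{equation*}
\int_0^T f(t) \exp(-st) \dif t \;=\; \int_0^T \exp(-(s-s_0)t) \dif \phi(t),
\end{equation*}
interpreting the right-hand side as a Riemann--Stieltjes integral; this is the key step that sidesteps the fact that $f$ itself is only assumed to be such that the improper integral at $s_0$ converges. Integration by parts then gives
\begin{equation*}
\int_0^T f(t) \exp(-st) \dif t \;=\; \phi(T)\exp(-(s-s_0)T) + (s-s_0)\int_0^T \phi(t) \exp(-(s-s_0)t) \dif t,
\end{equation*}
where the boundary contribution at $0$ vanishes because $\phi(0)=0$.

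Finally I would let $T \to \infty$. Since $\real{s-s_0} > 0$, the boundary term satisfies $|\phi(T)\exp(-(s-s_0)T)| \leq M \exp(-\real{s-s_0}\,T) \to 0$. By the same estimate, $|\phi(t)\exp(-(s-s_0)t)| \leq M\exp(-\real{s-s_0}\,t)$ is integrable on $[0,\infty)$, so $\Lp{\phi}(s-s_0)$ converges absolutely. Passing to the limit in the identity above simultaneously proves that $\Lp{f}(s)$ converges for every $s$ with $\real{s}>\real{s_0}$ and yields the representation $\Lp{f}(s) = (s-s_0)\Lp{\phi}(s-s_0)$, with the right-hand side being absolutely convergent.

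The main obstacle is purely a regularity issue: since $f$ need not be absolutely integrable against $\exp(-s_0 t)$, one must carry out the integration by parts in the Stieltjes sense with the well-behaved integrator $\phi$ rather than directly on $f$. Once this technicality is handled, all remaining steps reduce to the elementary exponential decay estimate on $\phi(t)\exp(-(s-s_0)t)$ provided by the strict inequality $\real{s-s_0} > 0$.
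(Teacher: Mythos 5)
Your proof is correct. Note that the paper does not prove this statement itself --- it is quoted verbatim from Doetsch \cite{Doetsch} --- and your argument (integration by parts against the bounded, absolutely continuous antiderivative $\phi$, then the elementary decay estimate from $\real{s-s_0}>0$) is precisely the classical proof given in that reference, with the one genuine technicality, namely that $f$ is only locally integrable so the partial integration must be carried out on $\phi$ rather than on $f$, correctly identified and handled.
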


Within its region of convergence, a Laplace transform always represents an analytic function.
\begin{theorem}[{\hspace{1sp}\cite[Theorem 6.1]{Doetsch}}]\label{thm:laplace_analytic}
    Let $\Lp{f}(s)$ converge for $\real{s} > \alpha_0$. Then all its derivatives exist for $\real{s} > \alpha_0$, and they are Laplace transforms, too,
    \begin{equation*}
        \left(\frac{\dif}{\dif{s}}\right)^n \Lp{f}(s) = (-1)^n \Lp{t^n f(t)}(s), \quad n\in\mathbb{N}.
    \end{equation*}
\end{theorem}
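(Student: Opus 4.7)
The plan is to prove the base case $n=1$ carefully by differentiating under the integral sign, then iterate by induction. The first preparatory step is to reduce to absolute convergence. Fix $s_\ast$ with $\real{s_\ast} > \alpha_0$ and choose $\alpha_1$ with $\alpha_0 < \alpha_1 < \real{s_\ast}$. By the theorem preceding the statement, $\Lp{f}(s)$ can be rewritten on $\real{s} > \alpha_1$ as $(s-\alpha_1)\Lp{\phi}(s-\alpha_1)$ with $\Lp{\phi}$ absolutely convergent there; since multiplication by the entire factor $(s-\alpha_1)$ preserves analyticity, it suffices to treat the absolutely convergent case, i.e., I may assume that the defining integral converges absolutely on $\real{s}>\alpha_1$.

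Next, I would justify passing $\frac{d}{ds}$ under the integral sign at $s_\ast$. Choose a small closed disk $\overline{B(s_\ast,r)}$ contained in $\{\real{s}>\alpha_1\}$ and pick $\beta$ with $\alpha_1 < \beta < \real{s_\ast}-r$. For every $s$ in this disk and every $t\geq 0$, the elementary bound $t\,e^{-(\real{s}-\beta)t} \leq C$ (maximising $t \mapsto t e^{-\gamma t}$ with $\gamma = \real{s}-\beta > 0$) yields
\begin{equation*}
    \abs{t f(t) e^{-st}} \;\leq\; C \,\abs{f(t)} e^{-\beta t},
\end{equation*}
and the right-hand side is integrable by absolute convergence at $\beta$ (Theorem~\ref{thm:laplace_absolute_convergence}). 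This dominating bound, together with the usual difference-quotient argument (or equivalently the Leibniz rule for parameter-dependent integrals), gives
\begin{equation*}
    \frac{\dif}{\dif{s}}\Lp{f}(s) \;=\; \int_0^\infty f(t)\,\frac{\partial}{\partial s}\bigl(e^{-st}\bigr)\dif{t} \;=\; -\Lp{t f(t)}(s)
\end{equation*}
at $s_\ast$; since $s_\ast$ was arbitrary in $\{\real{s}>\alpha_0\}$, the case $n=1$ follows.

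For the inductive step, assume the formula holds for $n$. Applying the base case to $g(t) \coloneqq t^n f(t)$ gives the claim for $n+1$, provided that $\Lp{g}$ converges absolutely on $\real{s}>\alpha_0$. This is again a bounding argument: with $\beta$ chosen as before, $t^n e^{-(\real{s}-\beta)t} \leq C_n$, so
\begin{equation*}
    \abs{t^n f(t) e^{-st}} \;\leq\; C_n \,\abs{f(t)} e^{-\beta t},
\end{equation*}
which is integrable. Combining the induction hypothesis with the differentiation step yields
\begin{equation*}
    \Bigl(\frac{\dif}{\dif{s}}\Bigr)^{n+1}\Lp{f}(s) \;=\; \frac{\dif}{\dif{s}}\bigl((-1)^n \Lp{t^n f(t)}(s)\bigr) \;=\; (-1)^{n+1}\Lp{t^{n+1} f(t)}(s).
\end{equation*}

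The main obstacle is bookkeeping the two notions of convergence: the hypothesis only gives simple convergence, whereas the dominated-convergence argument requires absolute convergence. The preceding theorem in the excerpt is exactly the tool that trivialises this issue by allowing us to shift into an absolutely convergent half-plane; everything else is a routine majorisation by $t^n e^{-\gamma t} \leq C_n$.
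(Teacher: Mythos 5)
The paper itself gives no proof of this statement---it is quoted directly from Doetsch---so there is no in-paper argument to compare against; judged on its own, your proposal has a genuine gap at its very first step, the reduction to absolute convergence. The hypothesis grants only (conditional) convergence of $\int_0^\infty f(t)e^{-st}\dif{t}$ for $\real{s}>\alpha_0$, and the abscissa of absolute convergence can be strictly larger than $\alpha_0$ (the paper itself distinguishes $\alpha_0\leq\alpha$ for exactly this reason). The representation theorem you invoke does \emph{not} say that the defining integral converges absolutely for $\real{s}>\alpha_1$; it says that a \emph{different} Laplace transform, $\Lp{\phi}$ with $\phi(t)=\int_0^t e^{-\alpha_1\tau}f(\tau)\dif{\tau}$, converges absolutely and that $\Lp{f}(s)=(s-\alpha_1)\Lp{\phi}(s-\alpha_1)$. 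Hence the sentence ``I may assume that the defining integral converges absolutely on $\real{s}>\alpha_1$'' is unjustified, and with it your majorant $\abs{t f(t)e^{-st}}\leq C\abs{f(t)}e^{-\beta t}$ loses its force: $\int_0^\infty\abs{f(t)}e^{-\beta t}\dif{t}$ may be infinite, so \cref{thm:laplace_absolute_convergence} cannot be applied at $\beta$. The same unavailable assumption is reused in your inductive step. Note also that ``multiplication by $(s-\alpha_1)$ preserves analyticity'' would only give analyticity of $\Lp{f}$, not the asserted identity for the derivatives, nor the (nontrivial) claim that $\Lp{t^n f(t)}(s)$ converges at all for $\real{s}>\alpha_0$ --- which is part of the statement (``they are Laplace transforms, too'').

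The route can be repaired, but the missing work is precisely the substance of the theorem: conditional convergence is the only hard case, since your dominated-convergence argument for the absolutely convergent transform $\Lp{\phi}$ is correct and routine. What remains is to differentiate the product $(s-\alpha_1)\Lp{\phi}(s-\alpha_1)$ and identify the result with $-\Lp{t f(t)}(s)$. Writing $w=s-\alpha_1$ and using $f(t)=e^{\alpha_1 t}\phi'(t)$ together with the boundedness of $\phi$ (which follows from convergence of $\Lp{f}$ at $\alpha_1$), an integration by parts gives
\begin{equation*}
    -\Lp{t f(t)}(s) \;=\; -\Lp{t\phi'(t)}(w) \;=\; \Lp{\phi}(w) - w\,\Lp{t\phi(t)}(w) \;=\; \frac{\dif}{\dif{s}}\bigl((s-\alpha_1)\Lp{\phi}(s-\alpha_1)\bigr),
\end{equation*}
where the leftmost integral is shown to converge (conditionally) in the process. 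Without this step your proof covers only the case in which $\Lp{f}$ itself converges absolutely on the half-plane --- which happens to be the setting the paper uses later, but is strictly weaker than the theorem as stated.
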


Suppose we are given a function $F(s)$ and want to represent it as a Laplace transform. By \cref{thm:laplace_analytic} a necessary condition on $F$, such that $\Lp{f}(s) = F(s)$ exists, is that $F(s)$ itself is analytic in a region $\real{s} > \alpha_0$. Some publications have examined more precise conditions, see, e.g., \cite{Doetsch,Widder,Wilson86,Cooper64} and the Paley-Wiener theorem, e.g., in \cite{Rudin}.

A sufficient condition for $F$ to be a Laplace transform is that $F(s)$ is a Stieltjes function, see, e.g., \cite[Theorem~4a]{Widder}. This can easily be verified by exploiting that
\begin{equation*}
    \frac{1}{t+s} = \mathcal{L}_\tau\{\exp(-t\tau)\}(s),\quad \real{s} > -t,
\end{equation*}
so that we can rewrite \cref{eq:stieltjes_function} as
\begin{equation*}
    \int_0^\infty \frac{\rho(t)}{t+s} \dif{t} = \Lp{\Lp{\rho}}(s).
\end{equation*}
Since this condition is not necessary, see, e.g.,~\cite{Berg2008}, we know that the class of Stieltjes functions is a subclass of the class of all functions that allow a Laplace transform representation.

Krylov methods for $F(A)b$ where $F(z)$ is a Laplace transform (or, more generally, a \emph{Laplace--Stieltjes function}) have been considered before. E.g., in~\cite{BenziSimoncini2017}, a ``tensorized'' Krylov method for efficiently approximating functions of certain Kronecker-structured matrices is proposed, while \cite{MasseiRobol2021} discusses pole selection for the rational Krylov method for Laplace transforms. Remark 1 in \cite{Druskin2008} mentions that for Hermitian matrices the error $\varepsilon_m$ of the Lanczos approximation decreases strictly monotonically if $F(z) = \Lp{f}(z)$ with real nonnegative $f$. This result was later generalized to extended Krylov subspace methods in~\cite{Schweitzer2016b}. However, no restart approach for general Laplace transforms has been developed so far.

\section{Restarts for Laplace transforms: Theory}\label{sec:laplace_theory}
This section contains our main theoretical result, \cref{thm:error_laplace}. It is built on a known representation of $\varepsilon_m$ for $F(z) = \exp(-tz)$ that we restate in \cref{le:error_exp}. We conclude the section by emphasizing the connection to the earlier work on Stieltjes functions in \cref{cor:error_laplace_to_stieltjes}.

Suppose we are interested in the action of the exponential function,
\begin{equation*}
    y(t) = \exp(-tA)b.
\end{equation*}
The Arnoldi approximation in this case is given by
\begin{equation*}
    y_m(t) = \norm{b} V_m\exp(-tH_m)e_1.
\end{equation*}
We explicitly include the dependency on $t$, here, since we need it later. The following lemma gives an expression for the error $\varepsilon_m(t)$. This result has already been used, e.g., to obtain error bounds in \corrected{\cite[eq.~(32)]{Druskin98} for Hermitian $A$ and in \cite[Theorem 3.1]{Wang17} for non-Hermitian $A$. We repeat the proof here for convenience.} 
\begin{lemma}\label{le:error_exp}
    The error $\varepsilon_m(t) = y(t) - y_m(t)$ can be written as 
    \begin{equation*}
        \varepsilon_m(t) = -h_{m+1,m} \norm{b} \int_0^t \exp((\tau - t)A) v_{m+1} g(\tau) \dif{\tau}
    \end{equation*}
    where
    \begin{equation}\label{eq:emexpHe1}
        g(\tau) = \Tra{e}_m \exp(-\tau H_m)e_1
    \end{equation}
    is the $(m,1)$ entry of $\exp(-\tau H_m)$.
\end{lemma}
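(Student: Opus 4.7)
The plan is to derive a linear ODE that the error $\varepsilon_m(t)$ satisfies and then solve it via the variation-of-constants formula. Both $y(t)$ and $y_m(t)$ have explicit matrix-exponential form, so differentiation with respect to $t$ is straightforward, and the Arnoldi relation \eqref{eq:arnoldi_relation} will convert the inner Hessenberg factor into $A$ plus a rank-one ``defect'' term supported on $v_{m+1}$.

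Concretely, I would first observe that $y(t)$ solves the homogeneous system $y'(t) = -A y(t)$ with $y(0) = b$. Differentiating $y_m(t) = \|b\|_2 V_m \exp(-tH_m)e_1$ in $t$ and then invoking \eqref{eq:arnoldi_relation} in the form $V_m H_m = A V_m - h_{m+1,m} v_{m+1} \Tra{e}_m$ gives
\begin{equation*}
    y_m'(t) = -\|b\|_2 V_m H_m \exp(-tH_m) e_1 = -A y_m(t) + h_{m+1,m} \|b\|_2 v_{m+1}\, g(t),
\end{equation*}
with $g(t)$ as defined in \eqref{eq:emexpHe1}. Subtracting, I get a linear inhomogeneous ODE for the error,
\begin{equation*}
    \varepsilon_m'(t) = -A\,\varepsilon_m(t) - h_{m+1,m} \|b\|_2\, v_{m+1}\, g(t),
\end{equation*}
whose initial value is $\varepsilon_m(0) = b - \|b\|_2 V_m e_1 = 0$, since $v_1 = b/\|b\|_2$.

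At this point the variation-of-constants formula immediately yields
\begin{equation*}
    \varepsilon_m(t) = -h_{m+1,m}\,\|b\|_2 \int_0^t \exp\bigl(-(t-\tau)A\bigr) v_{m+1}\, g(\tau)\,\dif\tau,
\end{equation*}
which is the claimed formula after rewriting $-(t-\tau) = \tau - t$. There is no real obstacle here; the only thing to be careful about is the sign bookkeeping (the minus in $\exp(-tA)$ propagates through the Hessenberg factor) and the observation that $V_m e_1 = v_1 = b/\|b\|_2$, which makes the initial condition vanish so that the homogeneous part of the Duhamel representation drops out and only the convolution term remains.
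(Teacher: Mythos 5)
Your proof is correct and follows exactly the same route as the paper's: differentiate $y_m$, apply the Arnoldi relation to produce the rank-one defect term $h_{m+1,m}\norm{b}g(t)v_{m+1}$, observe $\varepsilon_m(0)=0$, and conclude by variation of constants. The sign bookkeeping and the remark that $V_me_1 = b/\norm{b}$ kills the homogeneous term are both handled correctly.
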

\corrected{
\begin{proof}
    First note that
    \begin{equation*}
        y_m'(t) = - \norm{b} V_m H_m \exp(-tH_m) e_1 =  -Ay_m(t) + h_{m+1,m} \norm{b}\Tra{e}_m \exp(-tH_m)e_1 v_{m+1},
    \end{equation*}
    where in the second equality we have used the Arnoldi relation \cref{eq:arnoldi_relation}.
    The derivative of the error thus satisfies 
    \begin{equation*}
        \varepsilon_m'(t) = y'(t) - y_m'(t) = -A\varepsilon_m(t) - h_{m+1,m} \norm{b}g(t) v_{m+1}.
    \end{equation*}
    With $\varepsilon_m(0) = 0$, this is an initial-value problem. The assertion then follows, e.g., by the variation of constants formula.
\end{proof}
}

Since the matrix Laplace transform can be expressed as an integral involving the matrix exponential, we can use \cref{le:error_exp} to develop a new error representation. The result is given in the following theorem in which we use the field of values $W(A)$ of a square matrix $A$ defined as $W(A) = \{x^{\mathsf{H}}Ax: \|x\|_2 = 1\}$.
\begin{theorem}\label{thm:error_laplace}
    Let $F(s) = \Lp{f(t)}(s)$ be the Laplace transform of $f(t)$. If
    \begin{equation*}
        \nu = \min_{\mu\in W(A)}\real{\mu}
    \end{equation*}
    lies within the region of absolute convergence of $\Lp{f(t)}(s)$,
    then the error $\varepsilon_m$ of the Arnoldi approximation can be represented via a matrix Laplace transform
    \begin{equation*}
        \varepsilon_m = F(A)b - \norm{b}V_mF(H_m)e_1 = -h_{m+1,m} \norm{b} \Lp{\tilde{f}}(A) v_{m+1},
    \end{equation*}
    where
    \begin{equation*}
        \tilde{f}(t) = \int_0^\infty f(t+\tau) g(\tau) \dif{\tau}, \enspace   \text{with } 
        g(\tau) = \Tra{e}_m \exp(-\tau H_m)e_1 \text{ (see \cref{eq:emexpHe1})}.
    \end{equation*}
    Moreover, $\nu$ lies in the region of absolute convergence of $\Lp{\tilde{f}}(s)$.
\end{theorem}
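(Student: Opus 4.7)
The plan is to lift \cref{le:error_exp} from the scalar exponential to the full Laplace transform by writing both $F(A)b$ and $\norm{b}V_m F(H_m)e_1$ as operator-valued Laplace integrals. The field-of-values bound $\|\exp(-tA)\|\le e^{-\nu t}$ (and its counterpart for $H_m$, obtained from $W(H_m)\subseteq W(A)$) paired with the hypothesis $M\coloneqq\int_0^\infty |f(t)|e^{-\nu t}\dif t<\infty$ ensures absolute (operator-norm) convergence of
\begin{equation*}
F(A)b=\int_0^\infty f(t)\exp(-tA)b\dif t\text{ and }\norm{b}V_m F(H_m)e_1=\int_0^\infty f(t)\norm{b}V_m\exp(-tH_m)e_1\dif t.
\end{equation*}
Subtracting them, the error becomes $\varepsilon_m=\int_0^\infty f(t)\varepsilon_m(t)\dif t$, and substituting the formula from \cref{le:error_exp} turns this into an iterated integral over the triangular region $\{0\le\tau\le t\}$.

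Next, I would apply Fubini's theorem twice. A first swap of the order of integration over the triangular region, followed by the substitution $s=t-\tau$, recasts the expression as
\begin{equation*}
-h_{m+1,m}\norm{b}\int_0^\infty g(\tau)\int_0^\infty f(s+\tau)\exp(-sA)\dif s\dif\tau\,v_{m+1}.
\end{equation*}
A second Fubini swap, now over $(s,\tau)\in[0,\infty)^2$, collects the inner $\tau$-integral as $\tilde f(s)$ and identifies the remaining outer integral as $\Lp{\tilde f}(A)v_{m+1}$, yielding the claimed representation.

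The main obstacle is supplying the absolute integrability needed for these Fubini applications and, in the same stroke, showing that $\nu$ lies in the region of absolute convergence of $\Lp{\tilde f}$. The workhorse estimate is $|g(\tau)|\le\|\exp(-\tau H_m)\|\le e^{-\nu\tau}$. To verify the convergence claim, I would bound
\begin{equation*}
\int_0^\infty|\tilde f(s)|e^{-\nu s}\dif s\le\int_0^\infty|g(\tau)|e^{\nu\tau}\int_\tau^\infty|f(u)|e^{-\nu u}\dif u\dif\tau\le M\int_0^\infty|g(\tau)|e^{\nu\tau}\dif\tau
\end{equation*}
after a Fubini swap and the substitution $u=s+\tau$, so everything reduces to controlling $|g(\tau)|e^{\nu\tau}$. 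In the generic case where the minimum real part of the Ritz values strictly exceeds $\nu$, this factor decays exponentially and the argument is complete; the delicate equality case, where one must invoke a sharper spectral bound on the $(m,1)$ entry $e_m^T\exp(-\tau H_m)e_1$ or a small-shift perturbation, is where I expect the most care to be required.
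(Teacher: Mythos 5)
Your overall route is the same as the paper's: integrate the error representation of \cref{le:error_exp} against $f$, interchange the order of integration over the triangle $\{0\le\tau\le t\}$, substitute, and read off $\tilde f$. The difference—and the genuine gap—is in the integrability estimate that must justify Fubini and establish absolute convergence of $\Lp{\tilde f}$ at $\nu$. In your final chain you bound the tail integral $\int_\tau^\infty \abs{f(u)}e^{-\nu u}\dif{u}$ uniformly by $M$ and are left with $M\int_0^\infty\abs{g(\tau)}e^{\nu\tau}\dif{\tau}$. Since the best available bound is $\abs{g(\tau)}\le\norm{\exp(-\tau H_m)}\le c\,e^{-\nu_m\tau}$ with $\nu_m=\min\real{W(H_m)}\ge\nu$ (for non-Hermitian $H_m$ a constant such as the Crouzeix--Palencia $c=1+\sqrt2$ is needed here), the integrand $\abs{g(\tau)}e^{\nu\tau}$ is merely bounded, not integrable, whenever $\nu_m=\nu$. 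You flag this "equality case" yourself, but the remedies you sketch (a sharper bound on the $(m,1)$ entry, a small-shift perturbation) are not needed and would not obviously close the gap, since $W(H_m)$ genuinely can touch the line $\real{s}=\nu$.

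The fix is simply not to discard the $\tau$-dependence of the tail. Keeping the double integral intact and interchanging once more,
\begin{equation*}
  \int_0^\infty \abs{g(\tau)}e^{\nu\tau}\int_\tau^\infty \abs{f(u)}e^{-\nu u}\dif{u}\dif{\tau}
  \;\le\; c\int_0^\infty\!\!\int_0^u \abs{f(u)}e^{-\nu u}\dif{\tau}\dif{u}
  \;=\; c\,\Lp{t\abs{f(t)}}(\nu),
\end{equation*}
because the factors $e^{-\nu_m\tau}e^{\nu\tau}\le 1$ combine harmlessly and the inner $\tau$-integral over the bounded interval $[0,u]$ contributes only the factor $u$. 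This is exactly the quantity the paper's appendix arrives at; its finiteness follows from \cref{thm:laplace_analytic}, since $-\Lp{t\abs{f(t)}}$ is the derivative of the convergent Laplace transform $\Lp{\abs{f}}$. The same bound $\Lp{t\abs{f(t)}}(\nu)<\infty$ also controls the absolute integrand of your first Fubini swap over the triangle, so one estimate serves both purposes. With that replacement your argument is complete and coincides with the paper's proof; everything else in your proposal (the operator-valued Laplace integrals, the substitution $s=t-\tau$, the identification of $\tilde f$) matches the paper's computation.
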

\begin{proof}
    For simplicity and without loss of generality we assume $\norm{b} = 1$. Then
    \begin{equation*}
        \varepsilon_m = \int_0^\infty f(t) (\exp(-tA)b - V_m\exp(-tH_m)e_1) \dif{t}
    \end{equation*}
    and applying \cref{le:error_exp} gives
    \begin{equation*}
        \varepsilon_m = -h_{m+1,m} \int_0^\infty f(t) \int_0^t \exp((s-t)A) v_{m+1} g(s) \dif{s} \dif{t}.
    \end{equation*}
    With the transformation $\tau = t-s$ in the inner integral we obtain
    \begin{align}
        \varepsilon_m &= -h_{m+1,m} \int_0^\infty \int_0^t f(t) \exp(-\tau A) v_{m+1} g(t-\tau) \dif{\tau} \dif{t} \nonumber\\
                                &= -h_{m+1,m} \int_0^\infty \int_0^\infty f(t) \exp(-\tau A) v_{m+1} g(t-\tau) u(t-\tau) \dif{\tau} \dif{t} \label{eq:order_of_integration}
    \end{align}
    with $u(x)$ being the Heaviside step function,
    \begin{equation*}
        u(x) =  \begin{cases}
                    1 & \text{if } x\geq 0 \\
                    0 & \text{else}
                \end{cases}.
    \end{equation*}
    Assume for now that we can interchange the order of integration. Then 
    \begin{align*}
        \varepsilon_m &= -h_{m+1,m} \int_0^\infty \exp(-\tau A)v_{m+1} \int_0^\infty f(t) g(t-\tau) u(t-\tau) \dif{t}\dif{\tau} \\
                                &= -h_{m+1,m} \int_0^\infty \exp(-\tau A)v_{m+1} \int_\tau^\infty f(t) g(t-\tau) \dif{t}\dif{\tau},
    \end{align*}
    where
    \begin{equation*}
        \int_\tau^\infty f(t) g(t-\tau) \dif{t} = \int_0^\infty f(t+\tau) g(t) \dif{t} = \tilde{f}(\tau),
    \end{equation*}
    which is the assertion of the theorem. That the order of integration in \cref{eq:order_of_integration} can indeed be interchanged and that $\nu$ lies in the region of absolute convergence of $\Lp{\tilde{f}}$ is shown in \cref{sec:appendix}. 
\end{proof}
Since the representation for $\varepsilon_m$ uses again a matrix Laplace transform which converges absolutely for $\nu$, we can apply \cref{thm:error_laplace} for later restarts, too. We summarize this as a corollary.
\begin{corollary} \label{cor:error_laplace_k}
    Let the assumptions of \cref{thm:error_laplace} hold. Define
    \begin{equation*}
        f^{(j)}(t) = \int_0^\infty f^{(j-1)}(t+\tau)g^{(j-1)}(\tau) \dif{\tau}, \corrected{\quad j\geq 2}
    \end{equation*}
    with $f^{(1)} = f$ and $g^{(j)}(\tau) = \Tra{e}_m\exp(-\tau H_m^{(j)}) e_1$ \corrected{for $j\geq 1$}.
    Then the error $\varepsilon_m^{(k)}$ after $\corrected{k \geq 1}$ restart cycles satisfies
    \begin{equation*}
        \varepsilon_m^{(k)} = (-1)^k \big(\prod_{j=1}^k h^{(j)}_{m+1,m}\big) \norm{b} \Lp{f^{(k+1)}}(A) v^{(k)}_{m+1}.
    \end{equation*}
\end{corollary}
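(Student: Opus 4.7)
The natural plan is to prove the corollary by induction on $k$, with Theorem~\ref{thm:error_laplace} handling both the base case and the inductive step. The key observation is that the error representation in Theorem~\ref{thm:error_laplace} expresses $\varepsilon_m^{(k)}$ again as a scalar multiple of a matrix Laplace transform applied to a unit vector (namely $v_{m+1}^{(k)}$, which has norm one by construction of the Arnoldi basis), and the final sentence of Theorem~\ref{thm:error_laplace} guarantees that $\nu$ still lies in the region of absolute convergence of the new Laplace transform. Hence the hypotheses of Theorem~\ref{thm:error_laplace} are preserved across restart cycles, so it can be re-applied in each cycle.

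For the base case $k=1$, Theorem~\ref{thm:error_laplace} directly gives
\[
\varepsilon_m^{(1)} = -h_{m+1,m}^{(1)} \norm{b} \Lp{\tilde f}(A) v_{m+1}^{(1)},
\]
and the definition $f^{(2)}(t) = \int_0^\infty f^{(1)}(t+\tau)g^{(1)}(\tau)\dif\tau$ identifies $\tilde f = f^{(2)}$, matching the claim with the sign $(-1)^1$ and the single-factor product.

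For the inductive step, assume the stated formula holds at level $k$, so that
\[
\varepsilon_m^{(k)} = c_k \, \Lp{f^{(k+1)}}(A)\, v_{m+1}^{(k)}, \quad c_k \coloneqq (-1)^k \big(\prod_{j=1}^k h^{(j)}_{m+1,m}\big) \norm{b}.
\]
In the $(k+1)$st Arnoldi cycle we approximate $\Lp{f^{(k+1)}}(A) v_{m+1}^{(k)}$ via $V_m^{(k+1)}\Lp{f^{(k+1)}}(H_m^{(k+1)}) e_1$, so $d_m^{(k)} = c_k V_m^{(k+1)}\Lp{f^{(k+1)}}(H_m^{(k+1)}) e_1$ and by the restart recurrence $\varepsilon_m^{(k+1)} = \varepsilon_m^{(k)} - d_m^{(k)}$. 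Applying Theorem~\ref{thm:error_laplace} to the function $\Lp{f^{(k+1)}}$ with starting vector $v_{m+1}^{(k)}$ (of norm one) yields
\[
\Lp{f^{(k+1)}}(A) v_{m+1}^{(k)} - V_m^{(k+1)}\Lp{f^{(k+1)}}(H_m^{(k+1)}) e_1 = -h_{m+1,m}^{(k+1)} \Lp{\widetilde{f^{(k+1)}}}(A)\, v_{m+1}^{(k+1)},
\]
where $\widetilde{f^{(k+1)}}(t) = \int_0^\infty f^{(k+1)}(t+\tau) g^{(k+1)}(\tau)\dif\tau = f^{(k+2)}(t)$ by definition. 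Multiplying by $c_k$ produces the coefficient $-h_{m+1,m}^{(k+1)} c_k = c_{k+1}$, which completes the induction.

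The only substantive issue is the bookkeeping: one has to verify that Theorem~\ref{thm:error_laplace} really applies at every cycle, which requires both that the starting vector be normalized (true for $v_{m+1}^{(k)}$) and that $\nu$ lie in the absolute convergence region of the iteratively defined $\Lp{f^{(j)}}$ (true by the last clause of Theorem~\ref{thm:error_laplace}, applied repeatedly). Beyond these two points, the argument is just tracking signs and the accumulating product of subdiagonal entries, so I would expect no real technical obstacle.
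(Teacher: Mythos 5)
Your proof is correct and follows exactly the route the paper takes: the paper's entire argument is the one-sentence observation that the error representation of Theorem~\ref{thm:error_laplace} is again a matrix Laplace transform with $\nu$ in its region of absolute convergence, so the theorem can be reapplied cycle by cycle. Your write-up merely makes the induction, the sign bookkeeping, and the normalization of $v_{m+1}^{(k)}$ explicit, all of which check out.
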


\begin{remark}
    Let $A$ be Hermitian and $F(z)=\Lp{f}(z)$ with real nonnegative $f=f^{(1)}$. Then  the error $\varepsilon_m^{(1)}$ decreases strictly monotonically with growing $m$; see the discussion at the end of \cref{sec:basics}. 
    In this case we also have that $\tilde{f} = f^{(2)}$ has constant sign.
    Thus, by induction, the error also decreases monotonically within every following 
    restart cycle.
\end{remark}

\Cref{thm:error_laplace} gives an error function representation for the restarted Arnoldi algorithm in terms of a Laplace transform. Before we move to
possible extensions in \cref{sec:extensions},
we now show that the error representation for Stieltjes functions can be considered a special case of \cref{cor:error_laplace_k}.

\begin{corollary}\label{cor:error_laplace_to_stieltjes}
    If $F(s)$ is a Stieltjes function, i.e.,
    \begin{equation*}
        F(s) = \Lp{f}(s) = \Lp{\Lp{\rho}}(s),
    \end{equation*}
    then the error function representation in \cref{cor:error_laplace_k} reduces to \cref{thm:stieltjes_error} in the sense that
    \begin{equation*}
        \Lp{f^{(k+1)}}(A) v^{(k)}_{m+1} = \int_0^\infty \rho(t) \big(\prod_{j=1}^k\psi_m^{(j)}(t) \big) (A+tI)^{-1} v_{m+1}^{(k)} \dif{t}.
    \end{equation*}
\end{corollary}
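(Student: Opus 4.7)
The plan is to reduce the statement to a clean induction showing that $f^{(k+1)}$ itself has a simple integral form involving $\rho$ and the rational functions $\psi_m^{(j)}$. First I would compute the Laplace transform of $g^{(j)}$: since $g^{(j)}(\tau) = \Tra{e}_m \exp(-\tau H_m^{(j)}) e_1$, we get
\[
  \Lp{g^{(j)}}(s) = \Tra{e}_m (s I + H_m^{(j)})^{-1} e_1 = \psi_m^{(j)}(s),
\]
which is exactly the quantity appearing in \cref{thm:stieltjes_error}. This identifies the $\psi_m^{(j)}$'s as the bridge between the two error representations.

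Next I would prove by induction on $k$ that
\[
  f^{(k+1)}(t) \;=\; \int_0^\infty \rho(s)\, \Bigl(\prod_{j=1}^k \psi_m^{(j)}(s)\Bigr)\, \exp(-st)\, \dif s.
\]
The base case $k=0$ is just the definition $f=f^{(1)}=\Lp{\rho}$. For the inductive step, I would substitute the formula for $f^{(k)}$ into the defining recurrence
\[
  f^{(k+1)}(t) = \int_0^\infty f^{(k)}(t+\tau)\, g^{(k)}(\tau)\, \dif\tau,
\]
interchange the order of integration in $(s,\tau)$, and use the Laplace-transform computation above to produce the extra factor $\psi_m^{(k)}(s)$. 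Swapping integrals is justified by absolute convergence, which ultimately follows from the hypothesis on $\nu$ and the integrability statements encoded in \cref{thm:error_laplace} and \cref{cor:error_laplace_k}.

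Once this closed form for $f^{(k+1)}$ is established, I would compute $\Lp{f^{(k+1)}}(A) v^{(k)}_{m+1}$ by writing the matrix Laplace transform as $\int_0^\infty f^{(k+1)}(t)\exp(-tA)\, \dif t$, inserting the formula for $f^{(k+1)}$, and again exchanging the order of integration to obtain
\[
  \int_0^\infty \rho(s) \Bigl(\prod_{j=1}^k \psi_m^{(j)}(s)\Bigr) \int_0^\infty \exp\bigl(-(sI+A)t\bigr)\, \dif t \; v^{(k)}_{m+1}\, \dif s.
\]
The inner integral is $(A+sI)^{-1}$, since the assumption $\spec(A)\cap(-\infty,0]=\emptyset$ together with $\nu>0$ guarantees that $-(sI+A)$ is a stable matrix for every $s\ge 0$, making this an absolutely convergent matrix integral. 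Substituting then yields the asserted identity.

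The main obstacle is purely the Fubini bookkeeping: verifying that all the iterated integrals over $[0,\infty)\times[0,\infty)$ converge absolutely uniformly so that orders can be swapped, and that the scalar bounds on $|\rho(s)\prod_j \psi_m^{(j)}(s)|$ allow passing from a scalar Laplace transform to the matrix Laplace transform evaluated at $A$. Since the companion theorems already pin down the regions of absolute convergence, this reduces to quoting \cref{thm:laplace_absolute_convergence} and the appendix estimate used in the proof of \cref{thm:error_laplace}, rather than carrying out fresh estimates.
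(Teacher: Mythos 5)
Your proposal is correct and follows essentially the same route as the paper: the same reduction to proving by induction that $f^{(k+1)} = \Lp{\rho(t)\prod_{j=1}^{k}\psi_m^{(j)}(t)}$, hinging on the same key identity $\Lp{g^{(j)}} = \psi_m^{(j)}$, followed by unfolding the outer Laplace transform at $A$ into the resolvent integral. The only cosmetic difference is that where you carry out the interchange of integration in the inductive step by hand, the paper invokes a ready-made Laplace-convolution identity (its reference [AlAhmad2020, Theorem~2.1]) for the same computation.
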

\begin{proof}
    Due to the fact that every Stieltjes function is a double Laplace transform, we can rewrite the statement of the corollary as
    \begin{equation*}
        \Lp{f^{(k+1)}}(A) v^{(k)}_{m+1} = \mathcal{L}^2\{\rho(t) \big(\prod_{j=1}^k\psi_m^{(j)}(t)\big)\}(A)v^{(k)}_{m+1},
    \end{equation*}
    and it is sufficient to show that
    \begin{equation*}
        f^{(k+1)} = \Lp{\rho(t) \big(\prod_{j=1}^k\psi_m^{(j)}(t)\big)}.
    \end{equation*}
    We prove this by induction over $k\geq 0$: The case $k=0$ is trivial, since by the corollary's hypothesis we have $f^{(1)} = \Lp{\rho}$. Now, assume that the above equation holds for $k-1$, i.e., $f^{(k)} = \Lp{\rho(t) \big(\prod_{j=1}^{k-1}\psi_m^{(j)}(t)\big)}$. 
    Then, by definition of $f^{(k+1)}$,
    \begin{align*}
        f^{(k+1)}(s) &= \int_s^\infty f^{(k)}(t) g^{(k)}(t-s) \dif{t} \\
            &= \Lp{ \mathcal{L}^{-1}\{f^{(k)}\} \Lp{g^{(k)}} }(s) = \Lp{\rho(t) \big(\prod_{j=1}^{k-1}\psi_m^{(j)}(t)\big) \Lp{g^{(k)}}},
    \end{align*}
    where for the second equality we used \cite[Theorem~2.1]{AlAhmad2020}. What remains to show is that $\Lp{g^{(k)}} = \psi_m^{(k)}$ holds, which follows as
    \begin{align*}
        \Lp{g^{(k)}}(t) = \Tra{e}_m \mathcal{L}_\tau\{\exp(-\tau H^{(k)}_m)\}(t) e_1 = \Tra{e}_m (tI + H^{(k)}_m)^{-1} e_1 = \psi^{(k)}_m(t).
    \end{align*}
\end{proof}

\section{Extensions}\label{sec:extensions} The approach developed for Laplace transforms can easily be ported to two further classes of functions as we shortly discuss now.

\subsection{Two-sided Laplace transforms}
\emph{Two-sided} Laplace transforms
\begin{equation*}
    \mathcal{B}\{f\}(s) = \int_{-\infty}^\infty f(t) \exp(-st) \dif{t} = \Lp{f(-t)}(-s) + \Lp{f(t)}(s)
\end{equation*}
can be interpreted as the generalization of the Fourier transform
\begin{equation*}
    \int_{-\infty}^\infty f(t) \exp(-i\omega t) \dif{t}
\end{equation*}
to complex arguments $\omega$.
The region of (absolute) convergence in the two-sided case is generally a vertical strip instead of a half-plane. Computationally, we can approximate $\mathcal{B}\{f\}(A)b$  using the Arnoldi approach for the two Laplace transforms $\Lp{f(-t)}(-A)$ and $\Lp{f(t)}(A)$. Note that the Krylov subspaces for both Laplace transforms are identical and need thus be built only once.

\subsection{Complete Bernstein functions} \label{subsec:bernstein}
A function $F: (0,\infty) \to [0,\infty)$ is called a \textit{Bernstein function} if $F\in C^\infty$ and its derivative $F'$ is completely monotone, i.e.,
\begin{equation*}
    \left(\frac{\dif}{\dif{s}}\right)^n (-1)^{n-1} F(s) \geq 0 \quad \text{ for $s > 0$ }, \enspace n\in\mathbb{N}.
\end{equation*}
See \cite{Berg2008,Schilling2012} for more information about Bernstein functions.

A Bernstein function $F(s)$ is called \textit{complete} if it admits the representation
\begin{equation}\label{eq:bernstein_represent}
    F(s) = c + as + \int_0^\infty (1-\exp(-st)) f(t) \dif{t}
\end{equation}
where $f(t)$ is completely monotone 
and $c, a\geq 0$.\footnote{In fact, every Bernstein function can be represented as $F(s) = c + as +\int_0^\infty (1-\exp(-st)) \dif{\mu}(t)$ where $\mu$ is a positive measure satisfying $\int_0^\infty \frac{t}{t+1} \dif{\mu}(t) < \infty$, see \cite[Theorem~3.2]{Schilling2012}. Thus, the measure $\mu$ of a complete Bernstein function has a completely monotone density $f$ with respect to the Lebesgue measure.}
If for a complete Bernstein function we compute $(cI + aA)b$ directly and use the Arnoldi approximation on $\mathcal{K}_m(A,b)$ for the integral in \cref{eq:bernstein_represent}, then the error of the resulting approximation $f_m$ is
\begin{align}
    F(A)b - f_m &= \int_0^\infty (I-\exp(-tA))b f(t) \dif{t} - \int_0^\infty V_m(I-\exp(-tH_m))e_1 f(t) \dif{t} \nonumber\\
        &= -\int_0^\infty (\exp(-tA)b - V_m\exp(-tH_m)e_1) f(t) \dif{t}, \label{eq:berstein_error}
\end{align}
which is the error for the Laplace transform $-\Lp{f}$ provided it exists.
Consequently, the error representation for Laplace transforms can also be used for complete Bernstein functions. Some care is required, though, regarding the regions of convergence, since at first sight it seems as if $\nu$ should lie in the
region of absolute convergence of $\Lp{f}$ with $f$ from \cref{eq:bernstein_represent}, and this might be quite restrictive. However, an inspection of the proof of \cref{thm:error_laplace} given in \cref{sec:appendix} shows that it is sufficient to have that $\nu$ lies in the region of absolute convergence of $\Lp{tf(t)}$. Since from  \cref{eq:bernstein_represent} we have
    \begin{equation*}
        F'(s) = a + \int_0^\infty t\exp(-st) f(t) \dif{t} = a + \Lp{tf(t)}(s),
    \end{equation*}
    this means that the error representation in \cref{thm:error_laplace} and \cref{cor:error_laplace_k} is also valid with converging integrals if $F$ is a complete Bernstein function for which $\nu$ lies in the region of absolute convergence of $F'$.

\section{Restarts for Laplace transforms: Implementation aspects}\label{sec:laplace_implementation}
The representation of the error function in \cref{thm:error_laplace} enables us to design a new restarted Arnoldi algorithm for Laplace transforms: In \cref{alg:generic_arnoldi}, we have to set
\begin{equation*}
     F^{(k)}(H_m^{(k)})e_1 = (-1)^{k-1} \big(\prod_{j=1}^{k-1} h_{m+1,m}^{(j)}\big) \norm{b} \Lp{f^{(k)}}(H_m^{(k)})e_1,
\end{equation*}
see \cref{cor:error_laplace_k}. 
The new algorithm is given in \cref{alg:restarted_laplace}, containing references to important aspects of implementation discussed in the remainder of this section.
\begin{algorithm}
\caption{Restarted Arnoldi method for $F(A)b$ where $F$ is a Laplace transform}\label{alg:restarted_laplace}
\begin{algorithmic}[1]
\State Compute the Arnoldi decomposition $AV_m^{(1)} = V_m^{(1)}H_m^{(1)} + h_{m+1,m}^{(1)}v_{m+1}^{(1)}\Tra{e}_m$ \Statex\hspace{\algorithmicindent}with respect to $A$ and $b$.
\State Set $f_m^{(1)} = \norm{b}V_m^{(1)}F(H_m^{(1)})e_1$.
\State Choose a quadrature rule with nodes $t_i$ and weights $ w_i$, $i=1,\dots,\ell$, \corrected{to evaluate \Statex\hspace{\algorithmicindent}the Laplace transform of $f$ at $H_m^{(1)}$ (see \cref{subsec:quadrature})}.
\For{$k = 2, 3, \dots$ until convergence}
    \State Compute the Arnoldi decomposition $AV_m^{(k)} = V_m^{(k)}H_m^{(k)} + h_{m+1,m}^{(k)}v_{m+1}^{(k)}\Tra{e}_m$ \Statex\hspace{\algorithmicindent}\hspace{\algorithmicindent}with respect to $A$ and $v_{m+1}^{(k-1)}$.
    \corrected{\State Choose a quadrature rule with nodes $t_i$ and weights $ w_i$, $i=1,\dots,\ell$, to evaluate \Statex\hspace{\algorithmicindent}\hspace{\algorithmicindent}the Laplace transform of $f^{(k)}$ at $H_m^{(k)}$ (see \cref{subsec:quadrature}).}
    \If{$k=2$}
        \State Define $s^{(k-1)} = f^{(k-1)}$.
    \Else
        \State Construct a spline $s^{(k-1)}$ that interpolates $f^{(k-1)}$.
    \EndIf
    \State Approximate $(f^{(k)}(t_i))_{i=1,\dots,\ell}$ as in \cref{eq:spline_approx} using $s^{(k-1)}$ .
    \State Compute $d_m^{(k-1)} = (-1)^{k-1} \big(\prod_{j=1}^{k-1} h_{m+1,m}^{(j)}\big)\norm{b}V_m^{(k)}\Lp{f^{(k)}}(H_m^{(k)})e_1$ by 
    \Statex\hspace{\algorithmicindent}\hspace{\algorithmicindent}numerical quadrature  (see \cref{eq:quadrature_laplace}).
    \State Set $f_m^{(k)} = f_m^{(k-1)} + d_m^{(k-1)}$.
\EndFor
\end{algorithmic}
\end{algorithm}

\subsection{Quadrature}\label{subsec:quadrature}
To evaluate the error function, we apply a quadrature rule
\begin{equation} \label{eq:quadrature_laplace}
    \Lp{f^{(k)}}(H_m^{(k)})e_1 \approx \sum_{i=1}^\ell w_i f^{(k)}(t_i) \exp(-t_i H_m^{(k)})e_1.
\end{equation}
Because of the term involving the exponential function, we expect that the main contributions to the integral come from the interval containing the smaller real parts of the eigenvalues 
of $H_m^{(k)}$. 
Consequently, we determine a quadrature rule based on the smallest real part of the spectrum of $H_m^{(1)}$: We set
\begin{equation*}
     \nu = \min_{x\in\spec(H_m^{(1)})} \real{x}
\end{equation*}
and determine the nodes $t_i$ and the weights $w_i$ of the quadrature rule in \cref{eq:quadrature_laplace} in the same way MATLAB's routine \texttt{integral} would evaluate the integral
\begin{equation*}
     \Lp{f^{(1)}}(\nu) = \int_0^\infty f^{(1)}(t) \exp(-t\nu)\dif{t}
\end{equation*}
with relative and absolute target accuracy $\varepsilon_\text{q}$. This means that after the transformation $x = \sqrt{t}/(1+\sqrt{t})$, an adaptive Gauss-Kronrod scheme is applied to subintervals of the new integration interval $[0, 1)$; see \cite{Shampine2008} for more details. The nodes $t_i$ and weights $w_i$ obtained this way make up the quadrature rule \cref{eq:quadrature_laplace} for the integral $\Lp{f^{(k)}}(H_m^{(k)})$; if necessary, after each restart, the rule can be computed anew and thus adapted to the new error representation. To simplify notation we do not explicitly denote this dependency of $\ell$, $t_i$ and $w_i$ on $k$ in \cref{alg:restarted_laplace} and elsewhere. \corrected{In principle, one might aim at developing and applying more advanced quadrature procedures to our specific situation, but the method just outlined was sufficient in all our experiments.} 

\subsection{Spline interpolation}\label{subsec:splines}
Since $f^{(k)}(t) = \Tra{e}_k\mathcal{L}_\tau\{f^{(k-1)}(t+\tau)\}(H_m^{(k-1)})e_1$ can be interpreted as another Laplace transform evaluated at the Hessenberg matrix $H_m^{(k)}$, we can use the quadrature rule from above to evaluate $f^{(k)}$, too. However, $f^{(k)}$ is a recursive integral. While it is possible to apply a series of quadrature rules to the expanded expression
\begin{equation*}
    f^{(k)}(t) = \int_0^\infty \dots \int_0^\infty  f(t + \sum_{i=1}^{k-1} \tau_i) \prod_{i=1}^{k-1} g^{(i)}(\tau_i) \; \dif{\tau_1}\dots\dif{\tau_{k-1}},
\end{equation*}
this approach is prohibitively expensive except for very small $k$. Instead of the quadrature rule 
\begin{equation*}
    f^{(k)}(t) \approx \sum_{i=1}^\ell w_i f^{(k-1)}(t+t_i)g^{(k-1)}(t_i), 
\end{equation*}
we propose to use
\begin{equation} \label{eq:spline_approx}
    f^{(k)}(t) \approx \sum_{i=1}^\ell w_i s^{(k-1)}(t+t_i)g^{(k-1)}(t_i),
\end{equation}
where $(t_i,w_i)_{i=1,\dots,\ell}$ are the quadrature nodes and weights of \cref{subsec:quadrature} and $s^{(k-1)}$ is a cubic spline interpolating $f^{(k-1)}$ at points \corrected{$x_1 < x_2 < \dots < x_q$} whose selection we will discuss below.

Recall that an interpolating cubic spline is a function of the form
\begin{equation*}
    s(x) =
    \begin{cases}
        p_1(x) & \phantom{x_1 \leq {}} x \leq x_2 \\
        p_2(x) & x_2 \leq x \leq x_3 \\
        &\vdots \\
        p_{q-1}(x) & x_{q-1} \leq x 
    \end{cases},
\end{equation*}
where each $p_i$ is a cubic polynomial. \corrected{Note that we included extrapolation for $x< x_1$ and $x> x_q$ in the above formula.} If $s(x)$ interpolates $f(x)$ at the $q$ points $x_i$, \corrected{$i=1,\ldots,q$} then we have $s(x_i) = f(x_i)$ and $s$ satisfies the smoothness conditions
\corrected{
\begin{gather*}
    p_{j-1}(x_{j}) = p_{j}(x_{j}), \\
    p'_{j-1}(x_{j}) = p'_{j}(x_{j}), \\
    p''_{j-1}(x_{j}) = p''_{j}(x_{j}),
\end{gather*}
with $j=2,\dots, q-1$.} Our implementation uses MATLAB's \texttt{spline} functionality which applies ``not-a-knot'' boundary conditions (meaning that $p_1=p_2$ and $p_{q-2}=p_{q-1}$). For a detailed treatment of spline interpolation, see, e.g., \cite{deBoor_splines}.

We now discuss how to choose the interpolation nodes. In cycle $k-1$, we have computed
\begin{equation*}
    \Lp{f^{(k-1)}}(H_m^{(k-1)})e_1 \approx \sum_{i=1}^\ell w_i f^{(k-1)}(t_i) \exp(-t_i H_m^{(k-1)})e_1,
\end{equation*}
see \cref{subsec:quadrature}.
Consequently, the values $f^{(k-1)}(t_i)$ are already known in the next cycle $k$ and can be used as interpolation points for $s^{(k-1)}$. In this case, the quadrature nodes and the interpolation nodes coincide: $q = \ell$ and $x_i = t_i$.
However, we observed that sometimes additional interpolation points are needed to ensure overall convergence of $f_m^{(k)}$ to $F(A)b$. An adaptive way to obtain those is to add the points $(x_i + x_{i+1})/2$ to the set of interpolation nodes\footnote{We assume that $f^{(k-1)}$ is approximated again as in \cref{eq:spline_approx} for these additional evaluations.} and repeat the process until a suitable stopping criterion is fulfilled. If $d_r$ denotes the value of $d_m^{(k-1)}$ after refinement step $r$, a possible criterion is that 
\begin{equation}\label{eq:spl_refinement_cond}
    \norm{d_{r}-d_{r-1}} \leq \varepsilon_\text{s} \norm{f_m^{(k-1)}}
\end{equation} with $\varepsilon_\text{s}$ sufficiently small.
The number of necessary refinement steps can be predicted. For example, if $f^{(k-1)} \in C^4[x_1, x_n]$, then the error bound (see \cite[eq.~(1.8)]{Beatson})
\begin{equation*}
  \abs{f^{(k-1)}(t)-s^{(k-1)}(t)} \leq c \,\Delta{x_i}^2 \max_j \Delta{x_j}^2 \, \|\corrected{\frac{\dif^4}{\dif{t}^4} f^{(k-1)}}\|_\infty \enspace \text{ for }    x_i\leq t \leq x_{i+1}
\end{equation*}
is reduced by the factor $16$ in each step. If the number of interpolation nodes needed is $\mathcal{O}(\ell^2)$, it is more advantageous to use $t_i+t_j$ with $i=1,\dots,\ell$, $j=i,\dots,\ell$: In that case, we have
\begin{equation*}
    \sum_{i=1}^\ell w_i f^{(k-1)}(t+t_i)g^{(k-1)}(t_i) = \sum_{i=1}^\ell w_i s^{(k-1)}(t+t_i)g^{(k-1)}(t_i)
\end{equation*}
in \cref{eq:spline_approx} for each $t=t_i$ required by \cref{eq:quadrature_laplace}.

\subsection{Matrix exponential function}
In \eqref{eq:quadrature_laplace} and \eqref{eq:spline_approx} we need to compute $\exp(-t_iH_m^{(k)})e_1$ for several values of $t_i$. While MATLAB's \texttt{expm} is the state-of-the-art choice if the full matrix exponential is required, for our purposes we need an efficient approach to obtain the action of the matrix exponential on a vector, $\exp(-t_iH_m^{(k)})e_1$, and this for several values $t_i$. If $A$ is \corrected{Hermitian}, the Hessenberg matrices $H_m^{(k)}$ are \corrected{Hermitian}, too, and we use the eigendecomposition, i.e.,
\begin{equation*}
    \exp(-t_iH_m^{(k)})e_1 = (X_m^{(k)})^{\mathsf{H}}\exp(-t_i D_m^{(k)}) (X_m^{(k)}e_1)
\end{equation*}
where $X_m^{(k)}$ is the matrix of eigenvectors and $D_m^{(k)}$ is the diagonal matrix of eigenvalues of $H_m^{(k)}$. The eigendecomposition needs to be computed only once for all $t_i$. 
Since the eigendecomposition of a \corrected{non-Hermitian} matrix might involve non-trivial Jordan blocks or might be ill-conditioned, we take another approach for \corrected{non-Hermitian} $A$. We choose the MATLAB package \texttt{expmv}\footnote{available at \url{https://github.com/higham/expmv}} which implements an algorithm described in \cite{Al-Mohy_expmv}. It uses the scaling part of the scaling and squaring method and a truncated Taylor approximation together with some further  preprocessing steps. The method avoids matrix-matrix products, so that generally it will compute $\exp(-t_iH_m^{(k)})e_1$ faster than \texttt{expm}. For large $t_i$, however, the fact that the squaring part is missing in \texttt{expmv} makes \texttt{expm} faster. We modified \texttt{expmv} in our implementation to switch to \texttt{expm} in these cases.

\section{Numerical experiments}\label{sec:experiments}
We now present several numerical examples for the restarted Arnoldi method for matrix Laplace transforms.\footnote{source code available at \url{https://github.com/MaTso7/laplace_restarting}}
All examples were calculated in MATLAB R2021a on a laptop with Intel\textsuperscript{\textregistered}\ Core\texttrademark\ i7-8650U and $16\,\text{GB}$. We denote by $\mathtt{tol}$ the target relative error norm which was set to $10^{-7}$ in all our examples. To be on the safe side, the target accuracy for the quadrature rule $\varepsilon_\text{q}$ was chosen as $\varepsilon_\text{q} = 10^{-3} \mathtt{tol}$, although we observed that larger values up to $\mathtt{tol}$ usually worked as well. The splines $s^{(k)}$ were refined as described in \cref{subsec:splines} with $\varepsilon_\text{s} = \varepsilon_\text{q}$ in \cref{eq:spl_refinement_cond}. 
For comparisons, we also used the MATLAB package \texttt{funm\_quad}\footnote{available at \url{http://www.guettel.com/funm_quad}} \cite{FGS14b} that implements the algorithm of \cite{FrommerGuettelSchweitzer2014a}, where we chose the target accuracy for numerical integration to be $\mathtt{tol}$ \corrected{and, when applicable, the \emph{two-pass Lanczos method} which can be used as an alternative to restarting in the Hermitian case. This method runs the Lanczos process twice (once for assembling the tridiagonal matrix $H_m$, once for forming the approximation $f_m$ as linear combination of the basis vectors). This way, it is not necessary to store the full Krylov basis, at the price of roughly doubling the computational cost; see, e.g.,~\cite{FrommerSimoncini2008} for details.}

\corrected{%
\begin{remark}\label{rem:2pl}
When comparing the performance of restarted methods and of two-pass Lanczos, one has to keep in mind that just counting matrix-vector products does not give the full picture of the computational cost of two-pass Lanczos: In order to monitor convergence, one needs to evaluate some stopping criterion (e.g., the norm of the difference between iterates), which at iteration $j$ typically involves (at least) forming $F(H_j)$. If a large number of iterations is necessary, this induces a non-negligible additional cost, even if the stopping criterion is not checked after each iteration. In contrast, in a restarted method, one can compare the norm of iterates from subsequent cycles by only evaluating $F^{(k)}$ on matrices of the fixed size $m$. To make the methods comparable at least to some extent, we check for convergence every $m$ iterations, where $m$ is the restart length.
\end{remark}%
}

\subsection{Fractional negative power less than $-1$: $F(s) = s^{-3/2}$}\label{subsec:s32}
As a first example, consider
\begin{equation*}
    F(s) = s^{-3/2} = \frac{2}{\pi}\Lp{\sqrt{t}}(s).
\end{equation*}
This Laplace transform converges absolutely for $\real{s} > 0$. Since $G(s) = s^{-1/2}$ is a Stieltjes function and $F(s)=G(s)s^{-1}$, we can compute $F(A)b$ by first solving the linear system $c=A^{-1}b$ and then treating $G(A)(A^{-1}b)=G(A)c$ as a Stieltjes function. This gives us an established method to compare \cref{alg:restarted_laplace} against, even though the matrix function $F(A) = A^{-3/2}$ does not directly fit into any of the restart frameworks considered in the literature so far. For the action of the inverse, $A^{-1}b$, we use the MATLAB built-in implementations of CG when $A$ is symmetric positive definite and restarted GMRES otherwise.

As matrix $A$, we first consider the discretized 3D Laplacian on a cubic grid with constant step size and Dirichlet boundary conditions, i.e.,
\begin{equation*}
    A_\text{L} = A_1\oplus A_1 \oplus A_1 \in \mathbb{R}^{N^3\times N^3}.
\end{equation*}
Here $A_1 = \text{tridiag}(-1, 2, -1)\in \mathbb{R}^{N\times N}$ denotes the 1D Laplace operator and $\oplus$ the Kronecker sum \corrected{defined via the Kronecker product $\otimes$ as
\begin{equation*}
    M_1 \oplus M_2 = M_1 \otimes I_{m_2} + I_{m_1} \otimes M_2, \quad M_1 \in \complns^{m_1\times m_1}, M_2 \in \complns^{m_2 \times m_2}.
\end{equation*}
}
As a non-Hermitian example, we take the convection-diffusion operator on the unit cube  with the direction vector $[1, -1, 1]$ and a diffusion coefficient of $\epsilon = 10^{-3}$. Using first-order upwind discretization for the convection term then results in the matrix
\begin{equation*}
    A_\text{CD} = h^{-2} \epsilon A_\text{L} + h^{-1} (A_2\oplus \Tra{A}_2\oplus A_2) \in \mathbb{R}^{N^3\times N^3}
\end{equation*}
with $h=(N+1)^{-1}$ and $A_2 = \text{tridiag}(-1, 1, 0) \in \mathbb{R}^{N\times N}$. Note that both $A_\text{L}$ and $A_\text{CD}$ are positive definite.

In our experiment, we vary $N$ from $20$ to $100$, which gives matrix sizes between $8,\!000$ and $1,\!000,\!000$. We use a restart length of $m=50$ for $A_\text{L}$ and $m=20$ for $A_\text{CD}$. We stop the iteration at the end of the first restart cycle for which the relative error norm lies below the desired accuracy $\mathtt{tol}=10^{-7}$. To make sure that the solution of $A^{-1}b$ is sufficiently precise for this, we chose a target accuracy of $10^{-9}$ for the residual norm of CG and GMRES.
For both methods, \cref{fig:s32_matvec} reports the required number of matrix-vector products for the different matrix sizes.
We observe that the new restart approach requires significantly fewer matrix-vector products \corrected{than the Stieltjes function approach.} This is due to the fact that the ``first phase'', in which the CG or GMRES method is executed, is not necessary. To stress this effect further, we display the percentage of matrix-vector products that the first phase requires above each data point. We deduce that the restarted matrix function algorithms perform quite similarly and in fact, the Stieltjes function restarting method from~\cite{FrommerGuettelSchweitzer2014a} even requires a slightly smaller number of matrix-vector products. The advantage of the new method can thus almost completely be attributed to the fact that it directly approximates the desired quantity without needing to additionally solve a linear system first. \corrected{For the symmetric matrix $A_L$, when comparing with the two-pass Lanczos approach, we see that the new approach requires fewer matrix-vector multiplications except for the two largest values of $N$.}  

\begin{figure}\centering
\subfloat{
%
%
\begin{tikzpicture}
\begin{axis}[%
title=\titleforLaplace,
width=0.5\textwidth,
xmin=12,
xmax=108,
xlabel={$N$},
ymin=0,
ymax=1320,
ylabel={matvecs},
xmajorgrids,
ymajorgrids,
legend style={at={(0.03,0.97)}, anchor=north west, legend cell align=left, align=left, draw=white!15!black}
]
\addplot [laplace1]
  table[row sep=crcr]{%
20	100\\
30	200\\
40	300\\
50	350\\
60	400\\
70	550\\
80	700\\
90	800\\
100	950\\
};
\addlegendentry{Laplace}

\addplot [stieltjes1]
  table[row sep=crcr]{%
20	185\\
30	275\\
40	364\\
50	503\\
60	592\\
70	730\\
80	868\\
90	1006\\
100	1143\\
};
\addlegendentry{Stieltjes}

\node[right, align=left, rotate=35, font=\color{mycolor2}]
at (axis cs:15,225) {\scriptsize 0.46};
\node[right, align=left, rotate=35, font=\color{mycolor2}]
at (axis cs:25,315) {\scriptsize 0.45};
\node[right, align=left, rotate=35, font=\color{mycolor2}]
at (axis cs:35,404) {\scriptsize 0.45};
\node[right, align=left, rotate=35, font=\color{mycolor2}]
at (axis cs:45,543) {\scriptsize 0.4};
\node[right, align=left, rotate=35, font=\color{mycolor2}]
at (axis cs:55,632) {\scriptsize 0.41};
\node[right, align=left, rotate=35, font=\color{mycolor2}]
at (axis cs:65,770) {\scriptsize 0.38};
\node[right, align=left, rotate=35, font=\color{mycolor2}]
at (axis cs:75,908) {\scriptsize 0.37};
\node[right, align=left, rotate=35, font=\color{mycolor2}]
at (axis cs:85,1046) {\scriptsize 0.35};
\node[right, align=left, rotate=35, font=\color{mycolor2}]
at (axis cs:95,1183) {\scriptsize 0.34};

\addplot [twopass1]
  table[row sep=crcr]{%
20	200\\
30	300\\
40	400\\
50	400\\
60	500\\
70	600\\
80	700\\
90	700\\
100	800\\
};
\addlegendentry{Lanczos}

\end{axis}
\end{tikzpicture}
\subfloat{
%
%
\begin{tikzpicture}

\begin{axis}[%
title=\titleforConvdiff,
width=0.5\textwidth,
xmin=12,
xmax=108,
xlabel={$N$},
ymin=0,
ymax=950,
ylabel={\phantom{matvecs}},
xmajorgrids,
ymajorgrids,
legend style={at={(0.03,0.97)}, anchor=north west, legend cell align=left, align=left, draw=white!15!black}
]
\addplot [laplace1]
  table[row sep=crcr]{%
20	120\\
30	160\\
40	180\\
50	220\\
60	240\\
70	260\\
80	320\\
90	340\\
100	400\\
};

\addplot [stieltjes1]
  table[row sep=crcr]{%
20	237\\
30	327\\
40	427\\
50	449\\
60	518\\
70	585\\
80	657\\
90	715\\
100	799\\
};

\node[right, align=left, rotate=35, font=\color{mycolor2}]
at (axis cs:15,277) {\scriptsize 0.58};
\node[right, align=left, rotate=35, font=\color{mycolor2}]
at (axis cs:25,367) {\scriptsize 0.57};
\node[right, align=left, rotate=35, font=\color{mycolor2}]
at (axis cs:35,467) {\scriptsize 0.58};
\node[right, align=left, rotate=35, font=\color{mycolor2}]
at (axis cs:45,489) {\scriptsize 0.55};
\node[right, align=left, rotate=35, font=\color{mycolor2}]
at (axis cs:55,558) {\scriptsize 0.58};
\node[right, align=left, rotate=35, font=\color{mycolor2}]
at (axis cs:65,625) {\scriptsize 0.56};
\node[right, align=left, rotate=35, font=\color{mycolor2}]
at (axis cs:75,697) {\scriptsize 0.54};
\node[right, align=left, rotate=35, font=\color{mycolor2}]
at (axis cs:85,755) {\scriptsize 0.55};
\node[right, align=left, rotate=35, font=\color{mycolor2}]
at (axis cs:95,839) {\scriptsize 0.55};
\end{axis}
\end{tikzpicture}
\caption{Number of matrix-vector products for approximating $A^{-3/2}b$. ``Laplace'' denotes \cref{alg:restarted_laplace}, ``Stieltjes'' is the combination of \texttt{funm\_quad} with CG (left) or GMRES (right). The small numbers above the dashed line indicate which fraction of the overall number of matrix-vector products was computed in the first phase, i.e., in the CG or GMRES method. \corrected{``Lanczos'' denotes the two-pass Lanczos method.} The restart length is $m$.}
\label{fig:s32_matvec}
\end{figure}
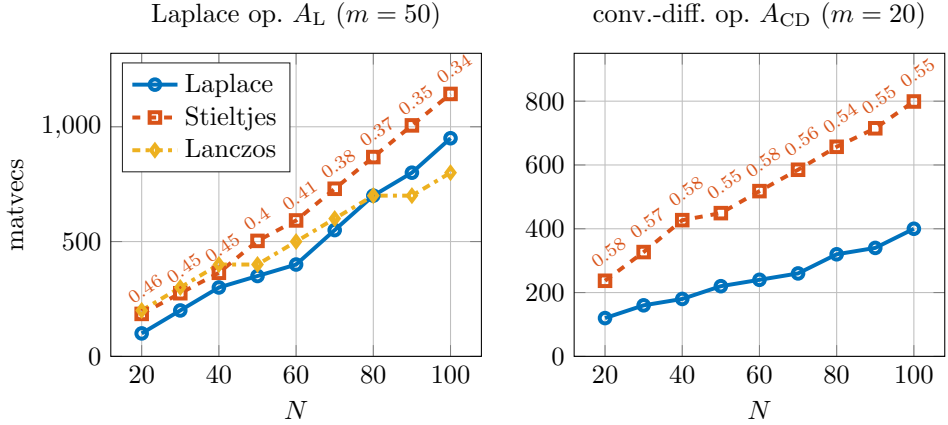

To illustrate that the reduction in matrix-vector products does not come with a decrease in accuracy, we also depict the final relative accuracies in \cref{fig:s32_matvec} in \cref{fig:s32_acc} as well as an exemplary convergence curve (for the largest problem instance $N=100$) in \cref{fig:s32_conv}. The convergence curve only covers the second phase of $A^{-1/2}(A^{-1}b)$, i.e., the approximation of the matrix function $A^{-1/2}$ applied to $c=A^{-1}b$.
We observe that the error reduction rate of both restarted methods is very similar, but that \texttt{funm\_quad} starts at a lower initial error norm so that it requires fewer restart cycles to reach the target accuracy. This is probably related to the different starting vectors $b$ and $A^{-1}b$ in both methods. \corrected{For $A_L$, non-restarted two-pass Lanczos exhibits superlinear convergence which would make this method require the least number of matrix-vector multiplications for accuracies beyond $10^{-7}$.} 

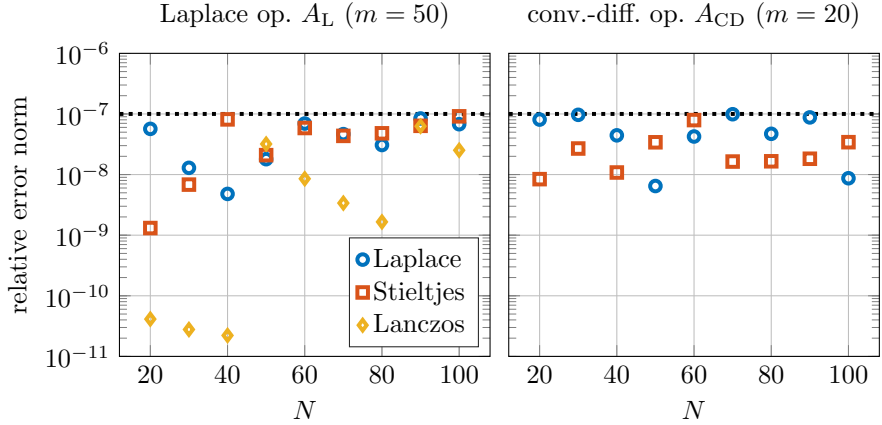
\begin{figure}\centering
\subfloat{
%
%
\begin{tikzpicture}

\begin{axis}[%
title=\titleforLaplace,
width=.5\textwidth,
xmin=12,
xmax=108,
xlabel={$N$},
ymode=log,
ymin=1e-11,
ymax=1e-06,
ylabel={relative error norm},
xmajorgrids,
ymajorgrids,
legend style={at={(0.97,0.03)}, anchor=south east, legend cell align=left, align=left, draw=white!15!black}
]
\addplot [laplace2]
  table[row sep=crcr]{%
20	5.67278170028882e-08 \\
30	1.29044574900336e-08 \\
40	4.79345695356431e-09 \\
50	1.79332907929683e-08 \\
60	6.94318430885119e-08 \\
70	4.64694390112736e-08 \\
80	3.07211126014486e-08 \\
90	8.41688603866269e-08 \\
100	6.77211509617863e-08 \\
};
\addlegendentry{Laplace}

\addplot [stieltjes2]
  table[row sep=crcr]{%
20	1.3110157501429e-09\\
30	6.82621613110315e-09\\
40	8.13413306382206e-08\\
50	2.09136292367843e-08\\
60	5.8619928893705e-08\\
70	4.34677315295056e-08\\
80	4.79175860996172e-08\\
90	6.36898322314859e-08\\
100	9.13309290071599e-08\\
};
\addlegendentry{Stieltjes}

\addplot [twopass2]
  table[row sep=crcr]{%
20	4.12502284422633e-11\\
30	2.77984421067966e-11\\
40	2.21756369340692e-11\\
50	3.19204361125245e-08\\
60	8.53297061650863e-09\\
70	3.38385339676338e-09\\
80	1.65542679815225e-09\\
90	6.27126614829866e-08\\
100	2.51624628029023e-08\\
};
\addlegendentry{Lanczos}

\addplot [tolline]
  table[row sep=crcr]{%
12	1e-07\\
108	1e-07\\
};
\end{axis}
\end{tikzpicture}
\subfloat{
%
%
\begin{tikzpicture}

\begin{axis}[%
title=\titleforConvdiff,
width=.5\textwidth,
xmin=12,
xmax=108,
xlabel={$N$},
ymode=log,
ymin=1e-11,
ymax=1e-06,
yticklabels={,,},
xmajorgrids,
ymajorgrids,
legend style={at={(0.97,0.03)}, anchor=south east, legend cell align=left, align=left, draw=white!15!black}
]
\addplot [laplace2]
  table[row sep=crcr]{%
20	8.07422784463715e-08\\
30	9.66855187505493e-08\\
40	4.43660312736290e-08\\
50	6.46980070943653e-09\\
60	4.24364413640115e-08\\
70	9.89258097967154e-08\\
80	4.70005979921947e-08\\
90	8.77200743055222e-08\\
100	8.69382938435744e-09\\
};

\addplot [stieltjes2]
  table[row sep=crcr]{%
20	8.40038849846075e-09\\
30	2.68899354220783e-08\\
40	1.08106587275625e-08\\
50	3.40761023302324e-08\\
60	7.91914767643469e-08\\
70	1.63908245267037e-08\\
80	1.65532323843993e-08\\
90	1.82358115552658e-08\\
100	3.42104347094156e-08\\
};

\addplot [tolline]
  table[row sep=crcr]{%
12	1e-07\\
108	1e-07\\
};
\end{axis}
\end{tikzpicture}
\caption{Accuracy at termination when approximating $A^{-3/2}b$. ``Laplace'' denotes \cref{alg:restarted_laplace}, ``Stieltjes'' is the combination of \texttt{funm\_quad} with CG (left) or GMRES (right).  \corrected{``Lanczos'' denotes the two-pass Lanczos method.} The restart length is $m$.}
\label{fig:s32_acc}
\end{figure}

\begin{figure}\centering
\subfloat{
%
%
\begin{tikzpicture}

\begin{axis}[%
title=\titleforLaplace,
width=.5\textwidth,
xmin=0,
xmax=20*50,
xlabel={matvecs},
ymode=log,
ymin=5e-09,
ymax=7,
yminorticks=true,
ylabel={relative error norm},
xmajorgrids,
ymajorgrids,
yminorgrids,
legend style={legend cell align=left, align=left, draw=white!15!black}
]
\addplot [laplace1]
  table[row sep=crcr, x expr=\thisrowno{0} * 50]{%
1	0.704073706323051\\
2	0.284076152150897\\
3	0.111947289463698\\
4	0.0484540481987515\\
5	0.0189637151176775\\
6	0.00813721578900452\\
7	0.00317559860624965\\
8	0.00136055365020194\\
9	0.000530201304687341\\
10	0.000226895910172165\\
11	8.83311787147725e-05\\
12	3.77822543683065e-05\\
13	1.47039138937676e-05\\
14	6.28914783927941e-06\\
15	2.44745316215646e-06\\
16	1.04678374436637e-06\\
17	4.07300627378632e-07\\
18	1.74145666479802e-07\\
19	6.77211509617863e-08\\
};
\addlegendentry{Laplace}

\addplot [stieltjes1]
  table[row sep=crcr, x expr=\thisrowno{0} * 50]{%
1	0.107429814056415\\
2	0.0302444002125087\\
3	0.00947772695351599\\
4	0.00328013180061802\\
5	0.00113560879645632\\
6	0.000417466937559997\\
7	0.000150777544070397\\
8	5.70761171433699e-05\\
9	2.1090216369746e-05\\
10	8.12217871199697e-06\\
11	3.04473594349645e-06\\
12	1.18627473531704e-06\\
13	4.49582974746843e-07\\
14	1.80392355396477e-07\\
15	9.13309290071599e-08\\
};
\addlegendentry{funm\_quad}

\addplot [twopass1]
  table[row sep=crcr, x expr=\thisrowno{0} * 50]{%
2	0.704073706323045\\
4	0.279149949178622\\
6	0.0534761804523625\\
8	0.00438426054267289\\
10	0.000307820900113818\\
12	1.26852110290129e-05\\
14	8.31166106125833e-07\\
16	2.51624628029023e-08\\
};
\addlegendentry{Lanczos}

\addplot [tolline]
  table[row sep=crcr, x expr=\thisrowno{0} * 50]{%
0  1e-07\\
20 1e-07\\
};
\end{axis}
\end{tikzpicture}
\subfloat{
%
%
\begin{tikzpicture}

\begin{axis}[%
title=\titleforConvdiff,
width=.5\textwidth,
xmin=0,
xmax=25*20,
xlabel={matvecs},
ymode=log,
ymin=5e-09,
ymax=7,
yminorticks=true,
yticklabels={,,},
xmajorgrids,
ymajorgrids,
yminorgrids,
legend style={legend cell align=left, align=left, draw=white!15!black}
]
\addplot [laplace1]
  table[row sep=crcr, , x expr=\thisrowno{0} * 20]{%
1 0.688635134749085\\
2 0.626854843135054\\
3 0.576650994425777\\
4 0.526762190415617\\
5 0.476117629763813\\
6 0.424141703185140\\
7 0.369203054105566\\
8 0.310443915033232\\
9 0.248002902129046\\
10  0.183050493704130\\
11  0.118501077235002\\
12  0.0623868170599399\\
13  0.0262807171485371\\
14  0.00749316993005576\\
15  0.00140157235713879\\
16  4.87086464380148e-05\\
17  5.83498508659273e-06\\
18  8.90397921040723e-07\\
19  1.08689772492615e-07\\
20  8.69382938435744e-09\\
};

\addplot [stieltjes1]
  table[row sep=crcr, x expr=\thisrowno{0} * 20]{%
1	0.0792311452498629\\
2	0.0599510340536015\\
3	0.0481938729329755\\
4	0.0388621279913721\\
5	0.030918527903387\\
6	0.0242163072041038\\
7	0.0187216161609343\\
8	0.0144001615216047\\
9	0.0110623546076092\\
10	0.00831973849322236\\
11	0.00573192100512827\\
12	0.00315897483475379\\
13	0.00122026864773615\\
14	0.000310855800377583\\
15	4.79135736393372e-05\\
16	2.28056202571125e-06\\
17	2.05114893235413e-07\\
18	3.42104347094156e-08\\
};

\addplot [tolline]
  table[row sep=crcr, x expr=\thisrowno{0} * 20]{%
0  1e-07\\
25 1e-07\\
};
\end{axis}
\end{tikzpicture}
\caption{Convergence curves for approximating $A^{-3/2}b$ with \cref{alg:restarted_laplace} (``Laplace'') and for approximating $A^{-1/2}c$ where $c = A^{-1}b$, with \texttt{funm\_quad}.  \corrected{``Lanczos'' denotes the two-pass Lanczos method.} $N=100$. The restart length is $m$.}
\label{fig:s32_conv}
\end{figure}
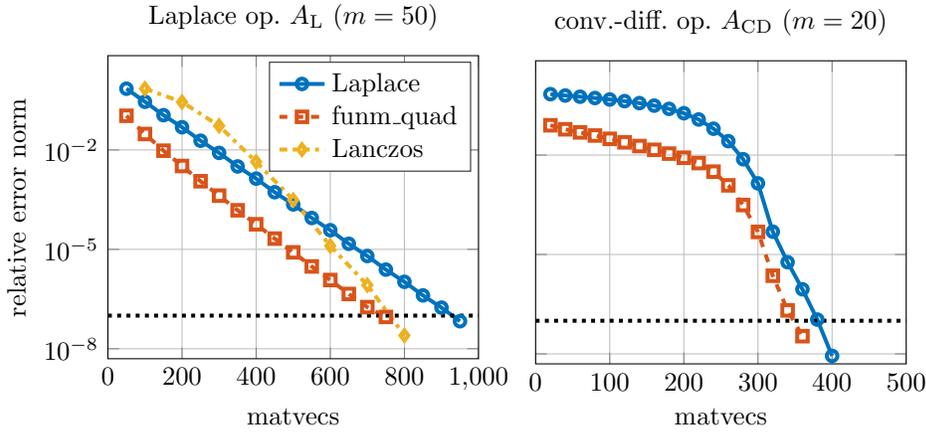

\Cref{fig:s32_time} illustrates that the lower number of matrix-vector products also translates into an advantage in run time, in particular for the larger problem instances. The overhead due to the spline construction and evaluation in \cref{alg:restarted_laplace} (and due to the matrix exponential for $A=A_\text{CD}$) is relatively large for the smaller problem sizes, so we do not observe a significant speed-up. For larger $N$, however, the speed-up is similar to what we would expect from \cref{fig:s32_matvec}. Moreover, we see linear dependence on the matrix size $N^3$, i.e., the overhead is negligible for larger matrices, the cost of the methods being  completely dominated by matrix-vector products. \corrected{Two-pass Lanczos does not yet exhibit the overhead as discussed in Remark~\ref{rem:2pl}, probably because the solution is found at a rather moderate Krylov subspace size of about 400.}

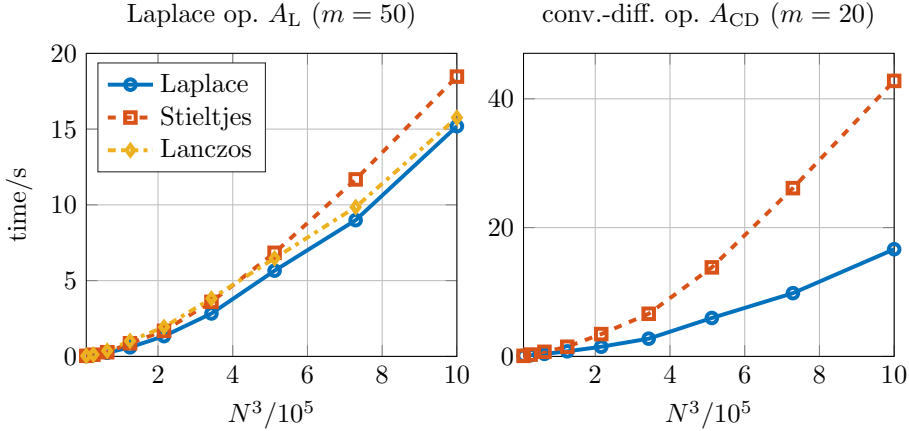
\begin{figure}\centering
\subfloat{
%
%
\begin{tikzpicture}
\begin{axis}[%
title=\titleforLaplace,
width=.5\textwidth,
xmin=7992,
xmax=1000008,
xlabel={$N^\text{3} / 10^{5}$},
ymin=0,
ymax=20,
ylabel={time/s},
xmajorgrids,
ymajorgrids,
scaled x ticks = false,
xtick={0,2e5,...,10e5},
xticklabels={0,2,...,10},
legend style={at={(0.03,0.97)}, anchor=north west, legend cell align=left, align=left, draw=white!15!black}
]
\addplot [laplace1]
  table[row sep=crcr]{%
8000	0.020275\\
27000	0.090037\\
64000	0.24035\\
125000	0.598949\\
216000	1.337009\\
343000	2.832405\\
512000	5.656938\\
729000	8.990075\\
1000000	15.189598\\
};
\addlegendentry{Laplace}

\addplot [stieltjes1]
  table[row sep=crcr]{%
8000	0.038029\\
27000	0.110386\\
64000	0.276226\\
125000	0.866981\\
216000	1.687112\\
343000	3.600581\\
512000	6.836653\\
729000	11.679088\\
1000000	18.463929\\
};
\addlegendentry{Stieltjes}

\addplot [twopass1]
  table[row sep=crcr]{%
8000	0.037465\\
27000	0.121189\\
64000	0.373204\\
125000	1.017581\\
216000	1.936739\\
343000	3.819076\\
512000	6.473644\\
729000	9.852824\\
1000000	15.756791\\
};
\addlegendentry{Lanczos}

\end{axis}
\end{tikzpicture}
\subfloat{
%
%
\begin{tikzpicture}

\begin{axis}[%
title=\titleforConvdiff,
width=.5\textwidth,
xmin=7992,
xmax=1000008,
xlabel={$N^\text{3}/10^5$},
ymin=0,
ymax=47.0642381,
xmajorgrids,
ymajorgrids,
scaled x ticks = false,
xtick={0,2e5,...,10e5},
xticklabels={0,2,...,10},
legend style={at={(0.03,0.97)}, anchor=north west, legend cell align=left, align=left, draw=white!15!black}
]
\addplot [laplace1]
  table[row sep=crcr]{%
8000	0.107253\\
27000	0.186035\\
64000	0.389328\\
125000	0.784314\\
216000	1.503119\\
343000	2.752375\\
512000	5.966028\\
729000	9.818296\\
1000000	16.641498\\
};

\addplot [stieltjes1]
  table[row sep=crcr]{%
8000	0.088814\\
27000	0.284351\\
64000	0.713247\\
125000	1.499026\\
216000	3.463649\\
343000	6.619234\\
512000	13.8243\\
729000	26.130229\\
1000000	42.785671\\
};

\end{axis}
\end{tikzpicture}
\caption{Execution times when approximating $A^{-3/2}b$ for varying matrix sizes $N^3$. ``Laplace'' denotes \cref{alg:restarted_laplace}, ``Stieltjes'' is the combination of \texttt{funm\_quad} with CG (left) or GMRES (right). \corrected{``Lanczos'' denotes the two-pass Lanczos method.} The restart length is $m$.}
\label{fig:s32_time}
\end{figure}

The results hold for other restart lengths $m$, too. As a small demonstration, we report the run times for varying restart lengths for $N=100$ in \cref{fig:s32_time_rlen}. For $A_\text{L}$, we had to include an additional factor of $10^{-2}$ in the target accuracy of the numerical integration in both \cref{alg:restarted_laplace} and \texttt{funm\_quad} for $m=10$ and a factor of $10^{-1}$ in \texttt{funm\_quad} for $m=20,\, 30,\, 40$. While a larger $m$ can possibly improve convergence and thus decrease the run time, this was only observed for the Hermitian case $A_\text{L}$. The increasing cost of orthogonalization for the next Arnoldi vector in the non-Hermitian case $A_\text{CD}$ leads to an overall increasing execution time. In any case, \cref{fig:s32_time_rlen} shows that \cref{alg:restarted_laplace} behaves similarly to other restart algorithms when changing $m$.

\begin{figure}\centering
\subfloat{
%
%
\begin{tikzpicture}
\begin{axis}[%
title=\titleforLaplacewom,
width=.5\textwidth,
xmin=2,
xmax=108,
xlabel={$m$},
ymin=0,
ymax=60,
ylabel={time/s},
xmajorgrids,
ymajorgrids,
]
\addplot [laplace1]
  table[row sep=crcr]{%
10	49.5122640000000\\
20	27.4138180000000\\
30	21.6585290000000\\
40	17.1214830000000\\
50	14.8062270000000\\
60	13.0432370000000\\
70	12.0131500000000\\
80	11.1898190000000\\
90	9.75618400000000\\
100	10.8948530000000\\
};
\addlegendentry{Laplace}

\addplot [stieltjes1]
  table[row sep=crcr]{%
10	52.8322720000000\\
20	29.6948640000000\\
30	22.9511160000000\\
40	19.6930430000000\\
50	18.4766120000000\\
60	17.8809880000000\\
70	16.4907450000000\\
80	16.6809550000000\\
90	16.4002330000000\\
100	15.7849450000000\\
};
\addlegendentry{Stieltjes}

\addplot [twopass1]
  table[row sep=crcr]{%
10	21.605331\\
20	17.755568\\
30	16.237595\\
40	16.920052\\
50	15.671607\\
60	15.982951\\
70	15.908369\\
80	15.062025\\
90	16.712128\\
100	14.642329\\
};
\addlegendentry{Lanczos}

\end{axis}
\end{tikzpicture}
\subfloat{
%
%
\begin{tikzpicture}

\begin{axis}[%
title=\titleforConvdiffwom,
width=.5\textwidth,
xmin=2,
xmax=108,
xlabel={$m$},
ymin=0,
ymax=250,
xmajorgrids,
ymajorgrids,
]
\addplot [laplace1]
  table[row sep=crcr]{%
10  10.311031\\
20  16.021759\\
30  19.207483\\
40  26.440988\\
50  34.788093\\
60  42.676366\\
70  56.574208\\
80  56.353829\\
90  78.984958\\
100 82.904861\\
};

\addplot [stieltjes1]
  table[row sep=crcr]{%
10  27.62518\\
20  41.719782\\
30  55.726115\\
40  72.326795\\
50  93.428572\\
60  111.146174\\
70  151.481242\\
80  165.298014\\
90  164.77077\\
100 227.491125\\
};

\end{axis}
\end{tikzpicture}
\caption{Execution times when approximating $A^{-3/2}b$ for varying restart lengths $m$ and $N=100$. ``Laplace'' denotes \cref{alg:restarted_laplace}, ``Stieltjes'' is the combination of \texttt{funm\_quad} with CG (left) or GMRES (right). \corrected{``Lanczos'' denotes the two-pass Lanczos method. Note that for two-pass Lanczos, $m$ specifies once in how many iterations the stopping criterion is checked.}}
\label{fig:s32_time_rlen}
\end{figure}
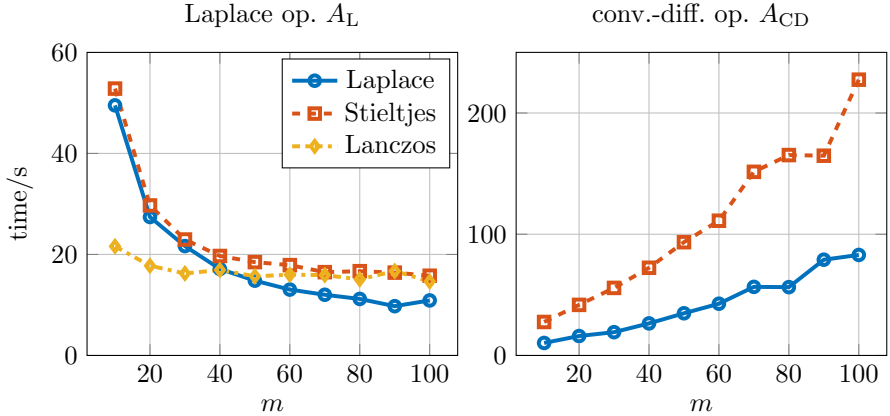

\subsection{Fractional diffusion processes on graphs: $F(s) = \exp(-\tau\sqrt{s})$} Given an (undirected) graph $G$, its symmetric positive semidefinite \emph{graph Laplacian} $L_G = D_G-A_G$, where $A_G$ is the adjacency matrix and $D_G$ the diagonal matrix containing the degrees of the nodes of $G$, the ODE system
\begin{equation}\label{eq:fractional_diffusion}
    \begin{cases}
    \frac{d}{d\tau} u(\tau) &= -L_G^{\alpha}u(\tau), \quad \tau \in (0, T], \\
    \phantom{\frac{d}{d\tau}} u(0) &= u_0
    \end{cases}
\end{equation}
(where $\alpha \in (0,1)$) models a fractional diffusion process on $G$; see, e.g.,~\cite{BenziSimunec2021}. Clearly, the solution of~\eqref{eq:fractional_diffusion} is given by
\begin{equation}\label{eq:fractional_diffusion_solution}
    u(t) = \exp(-\tau L_G^\alpha)u_0.
\end{equation}
It is well-known that the function $\exp(-\tau s^{\alpha})$ appearing in~\eqref{eq:fractional_diffusion_solution} is a Laplace--Stieltjes function, as it is the composition of a Bernstein function and a completely monotonic function. For $\alpha = 1/2$, we can give a closed form expression as a Laplace transform,
\begin{equation*}
    \exp(-\tau\sqrt{s}) = \frac{\tau}{2\sqrt{\pi}} \Lp{\frac{\exp(-\tau^2/(4t))}{t^{3/2}}}(s),
\end{equation*}
which converges absolutely for $\real{s}\geq 0$,
so that \cref{alg:restarted_laplace} can be applied. For invertible $A$, it is sometimes proposed to write 
\begin{equation} \label{product_with_A:eq}
\exp(-\tau\sqrt{A})u_0 = (h(A)A+I)u_0
\end{equation}
with the function $h(s) = (\exp(-\tau \sqrt{s})-1)/s$, which has the integral representation
\begin{equation}\label{eq:exp_sqrt_stieltjes}
h(s) = -\int_0^\infty \frac{1}{s+t}\frac{\sin(\tau\sqrt{t})}{\pi t} \dif{t};
\end{equation}
see, e.g.,~\cite{DruskinKnizhnerman1998}. While $h$ is not a Stieltjes function in the strict sense due to the oscillating generating function in~\eqref{eq:exp_sqrt_stieltjes}, the quadrature-based restarting technique from~\cite{FrommerGuettelSchweitzer2014a} is still applicable to it\footnote{By using the Abel--Dirichlet test~\cite[Theorem 11.23a]{Shilov1973}, one can easily check that also in this situation the integral representations of all occurring error functions are guaranteed to exist and have a finite value.}, thus giving an alternative way to approximate $\exp(-\tau\sqrt{A})$.
However, in the context of fractional diffusion on graphs, it is worth noting that $L_G$ is always a singular matrix so that $h(L_G)$ is not defined. As $L_Gu_0$ does not contain any contribution from the nullspace of $L_G$, it is possible to define the action $h(L_G)L_Gu_0$ by restriction to the orthogonal complement of the nullspace, but this makes some theoretical considerations more complicated. This holds in particular in the presence of round-off errors which may introduce (small) nullspace components during the Krylov iteration. Therefore, working directly with $\exp(-\tau\sqrt{s})$ is an attractive approach in this setting. 

\begin{table}
\centering
\caption{Number of nodes and edges of the largest connected components. The graphs were obtained from \cite{SuiteSparse}.}
\label{tab:graphs}
    \begin{tabular}{@{}lrr@{}}
    \toprule
        Name        & \multicolumn{1}{c}{nodes} & \multicolumn{1}{c}{edges} \\\midrule
        usroads-48  & $126\,146$    & $323\,900$\\
        loc-Gowalla & $196\,591$    & $1\,900\,654$\\
        dblp-2010   & $226\,413$    & $1\,432\,920$\\
        com-Amazon  & $334\,863$    & $1\,851\,744$\\
    \bottomrule
    \end{tabular}
\end{table}

\begin{figure}
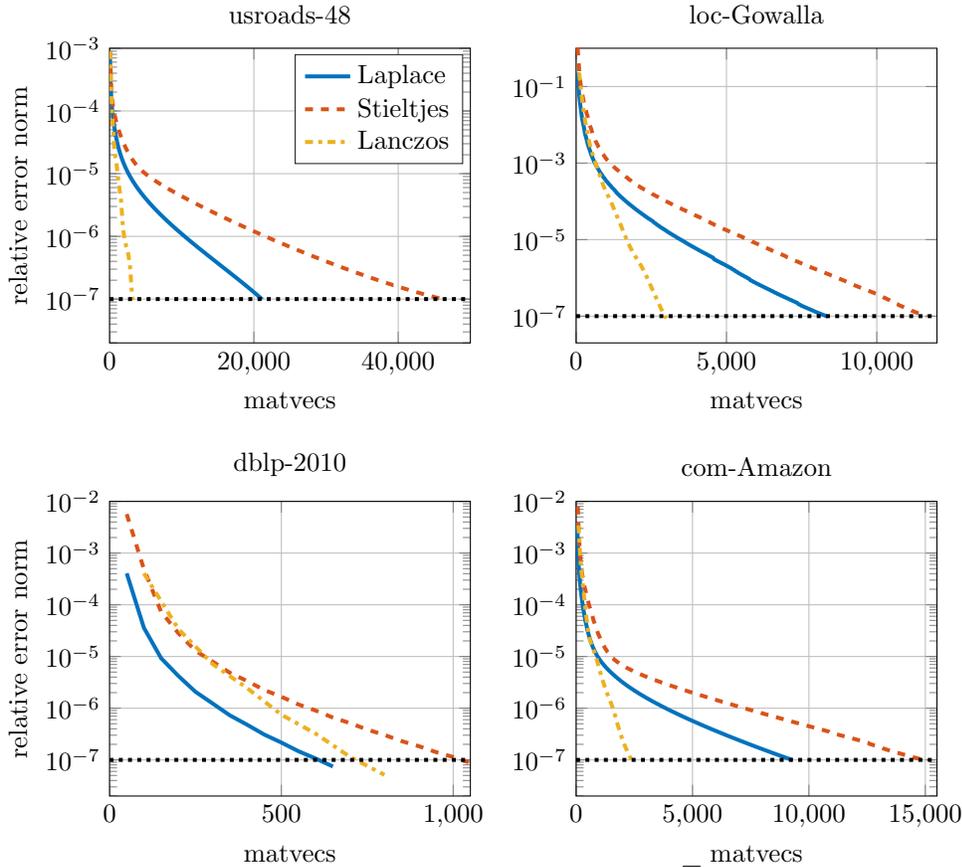
\centering
\begin{tikzpicture}
\begin{groupplot}[group style={group size=2 by 2, vertical sep=6em, horizontal sep=4em}, width=0.49\textwidth]
    \input{plots/fracdiff/usroads-48}
    \input{plots/fracdiff/loc-Gowalla}
    \input{plots/fracdiff/dblp-2010}
    \input{plots/fracdiff/com-Amazon}
\end{groupplot}
\end{tikzpicture}\vspace{-.5cm}
\caption{Convergence curves for approximating $\exp(-\sqrt{A})b$. ``Laplace'' denotes \cref{alg:restarted_laplace}, ``Stieltjes'' refers to the final error obtained by using \texttt{funm\_quad} for $h(A)(Ab)$ in $\exp(-\sqrt{A})b = (h(A)A+I)b$.  \corrected{``Lanczos'' denotes the two-pass Lanczos method.} The restart length is $m=50$.}
\label{fig:fracdiff_conv}
\end{figure}

We used four real-world graphs for our experiments. The adjacency matrices were obtained from the SuiteSparse Matrix Collection \cite{SuiteSparse}. For each graph, we extracted the largest connected component and only considered the corresponding Laplacian. \Cref{tab:graphs} contains the number of nodes and edges of the resulting graphs. For sake of simplicity, we chose $\tau=1$. The restart length is $m=50$. We present the convergence curves in \cref{fig:fracdiff_conv}. They show that in addition to the above argument, \cref{alg:restarted_laplace} yields higher accuracy \corrected{than the Stieltjes-type approach for the same number of matrix-vector products} for each considered graph. \corrected{Two-pass Lanczos requires the least number of matrix-vector multiplications except for the dblp-2010 graph.} 

Employing the ``corrected'' Arnoldi approximation introduced in~\cite{Saad1992}, we can modify the restarting method for Stieltjes functions $h$ to directly approximate $Ah(A)b$ without needing an explicit pre- or postmultiplication with $A$; see Corollary 3.6 and the discussion following it in~\cite{FrommerGuettelSchweitzer2014a}. We could apply this here, based on \eqref{product_with_A:eq}. For all graphs considered, however, this approach shows actually worse performance than the Stieltjes restart method applied to the starting vector $L_Gb$. It sometimes even stagnates or shows signs of instability, which we attribute to the fact that $L_G$ is singular. Thus premultiplying with $L_G$ is actually beneficial, removing unwanted nullspace contributions from $b$.

\subsection{Gamma function: $F(s) = \Gamma(s)$}
Recently, techniques to compute the matrix gamma function $\Gamma(A)$ were discussed in \cite{Cardoso2019,Miyajima20}. The gamma function can be represented as a two-sided Laplace transform,
\begin{equation*}
    \Gamma(s) = \int_0^\infty x^{s-1} \exp(-x) \dif{x} = \mathcal{B}\{\exp(-\exp(-t))\}(s),
\end{equation*}
which converges absolutely for $\real{s}>0$. To see the last equality, use $x^{s-1} = \exp((s-1)\log(x))$ and apply the transformation $t=-\log(x)$. 
We include this example to show that the approach presented in this paper 
also works for the two-sided case.
We compute $F(A)b$ by applying the method to the two Laplace transforms
\begin{equation*}
    F(A)b = \Lp{\exp(-\exp(-t))}(A)b + \Lp{\exp(-\exp(t))}(-A)b.
\end{equation*}
To determine the error, we implemented Algorithm 4.5 combined with 4.1 of \cite{Cardoso2019} to compute $\Gamma(A)$ directly. As this becomes prohibitively expensive for larger matrices, we use the 2D versions of the matrices in \cref{subsec:s32}, i.e.,
\begin{align*}
    A_\text{L}  &= A_1\oplus A_1 \in \mathbb{R}^{N^2\times N^2}, \\
    A_\text{CD} &= h^{-2} \epsilon A_\text{L} + h^{-1} (A_2\oplus \Tra{A}_2) \in \mathbb{R}^{N^2\times N^2},
\end{align*}
which are again positive definite. We let $N$ vary from $20$ to $120$.

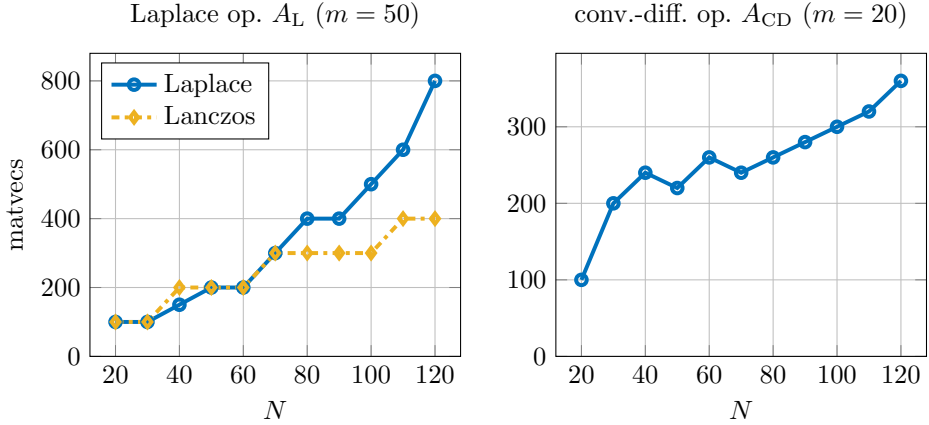
\begin{figure}\centering
\subfloat{
%
%
\begin{tikzpicture}

\begin{axis}[%
title=\titleforLaplace,
width=.5\textwidth,
xmin=12,
xmax=128,
xlabel={$N$},
ymin=0,
ymax=880,
ylabel={matvecs},
xmajorgrids,
ymajorgrids,
legend style={at={(0.03,0.97)}, anchor=north west, legend cell align=left, align=left}
]
\addplot [laplace1]
  table[row sep=crcr]{%
20	100\\
30	100\\
40	150\\
50	200\\
60	200\\
70	300\\
80	400\\
90	400\\
100	500\\
110	600\\
120	800\\
};
\addlegendentry{Laplace}

\addplot [twopass1]
  table[row sep=crcr]{%
20	100\\
30	100\\
40	200\\
50	200\\
60	200\\
70	300\\
80	300\\
90	300\\
100	300\\
110	400\\
120	400\\
};
\addlegendentry{Lanczos}

\end{axis}
\end{tikzpicture}
\subfloat{
%
%
\begin{tikzpicture}

\begin{axis}[%
title=\titleforConvdiff,
width=.5\textwidth,
xmin=12,
xmax=128,
xlabel={$N$},
ymin=0,
ymax=396,
ylabel={\phantom{matvecs}},
xmajorgrids,
ymajorgrids
]
\addplot [laplace1]
  table[row sep=crcr]{%
20	100\\
30	200\\
40	240\\
50	220\\
60	260\\
70	240\\
80	260\\
90	280\\
100	300\\
110	320\\
120	360\\
};
\end{axis}
\end{tikzpicture}
\caption{Number of matrix-vector products for approximating $\Gamma(A)b$. The restart length is $m$. \corrected{``Laplace'' denotes \cref{alg:restarted_laplace}. ``Lanczos'' denotes the two-pass Lanczos method.}}
\label{fig:gamma_matvec}
\end{figure}
\begin{figure}\centering
\subfloat{
%
%
\begin{tikzpicture}

\begin{axis}[%
title=\titleforLaplace,
width=.5\textwidth,
xmin=12,
xmax=128,
xlabel={$N$},
ylabel={relative error norm},
ymode=log,
ymin=1e-14,
ymax=1e-06,
yminorticks=true,
xmajorgrids,
ymajorgrids,
legend style={at={(0.97,0.03)}, anchor=south east, legend cell align=left, align=left}
]
\addplot [laplace2]
  table[row sep=crcr]{%
20  5.09846447465670e-14\\
30  3.38989542847210e-14\\
40  1.10865450291262e-13\\
50  1.88917282350450e-11\\
60  1.31207772512194e-09\\
70  1.66924291819961e-09\\
80  9.78991743413287e-10\\
90  1.81256727001258e-08\\
100 1.83869615841184e-08\\
110 7.76640429861237e-08\\
120 2.48827040675529e-08\\
};
\addlegendentry{Laplace}

\addplot [twopass2]
  table[row sep=crcr]{%
20	4.52825270916375e-14\\
30	5.30105483140303e-09\\
40	2.58555730378476e-14\\
50	3.30340934525502e-12\\
60	3.5054007975051e-09\\
70	1.34187277455167e-13\\
80	5.92401180499846e-11\\
90	1.94409726133609e-09\\
100	3.71271948555034e-08\\
110	1.97727200179853e-10\\
120	1.34772127789919e-09\\
};
\addlegendentry{Lanczos}

\addplot [tolline]
  table[row sep=crcr]{%
12	1e-07\\
138	1e-07\\
};
\end{axis}
\end{tikzpicture}
\subfloat{
%
%
\begin{tikzpicture}

\begin{axis}[%
title=\titleforConvdiff,
width=.5\textwidth,
xmin=12,
xmax=128,
xlabel={$N$},
ymode=log,
ymin=1e-14,
ymax=1e-06,
yminorticks=true,
yticklabels={,,},
xmajorgrids,
ymajorgrids,
]
\addplot [laplace2]
  table[row sep=crcr]{%
20  2.32493172183863e-11\\
30  8.38703380283829e-10\\
40  1.81508361811180e-08\\
50  1.46072591023318e-09\\
60  1.87736649899729e-08\\
70  1.58321942491192e-09\\
80  8.96482026715075e-10\\
90  5.12712235726890e-09\\
100 4.15392014551614e-09\\
110 6.73026558070998e-09\\
120 1.22511330575328e-09\\
};
\addplot [tolline]
  table[row sep=crcr]{%
12	1e-07\\
128	1e-07\\
};
\end{axis}
\end{tikzpicture}
\caption{Accuracy at termination when approximating $\Gamma(A)b$. \corrected{``Laplace'' denotes \cref{alg:restarted_laplace}. ``Lanczos'' denotes the two-pass Lanczos method.} The restart length is $m$.}
\label{fig:gamma_acc}
\end{figure}
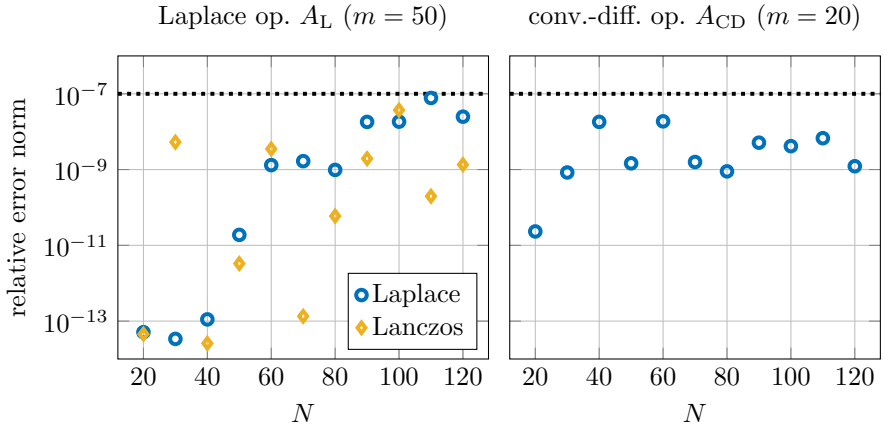

The number of matrix-vector products required is plotted in \cref{fig:gamma_matvec} and we present the achieved accuracy in \cref{fig:gamma_acc}. \corrected{For the symmetric matrix $A_L$, two-pass Lanczos requires fewer matrix-vector multiplications for $N \geq 80$.}

\subsection{Square root: $F(s) = \sqrt{s}$}
The square root is a complete Bernstein function since it can be represented as
\begin{equation*}
    F(s) = \sqrt{s} = \frac{1}{2\sqrt{\pi}} \int_0^\infty (1-\exp(-st)) t^{-3/2} \dif{t}.
\end{equation*}
The corresponding action $F(A)b=A^{1/2}b$ arises in several applications, e.g., machine learning~\cite{PleissJankowiakErikssonDamleGardner2020}, sampling from Gaussian Markov random fields~\cite{IlicTurnerSimpson2010} and preconditioning~\cite{ArioliLoghin2009}. Since
\begin{equation*}
    F'(s) = \frac{1}{2\sqrt{\pi}} \Lp{t^{-1/2}}(s)
\end{equation*}
converges absolutely for $\real{s}>0$, we can use the error representation of \cref{thm:error_laplace} for positive definite matrices, see \cref{subsec:bernstein}. For comparison, we use the package \texttt{funm\_quad} once again: Since $G(s) = s^{-1/2}$ is a Stieltjes function and $F(s) = G(s)s$, we can evaluate $F(A)b$ as $G(A)c$ with $c=Ab$.
We choose again the Laplace operator $A_\text{L}$ and the convection-diffusion operator $A_\text{CD}$ from \cref{subsec:s32}. 
The number of matrix-vector products is plotted in \cref{fig:sqrt_matvec} and the resulting error in \cref{fig:sqrt_acc}. Both plots show once more that \cref{alg:restarted_laplace} needs fewer matrix-vector products, with this time a more pronounced effect in the Hermitian case, \corrected{where also two-pass Lanczos requires more matrix-vector multiplications (except for $N=90$).}

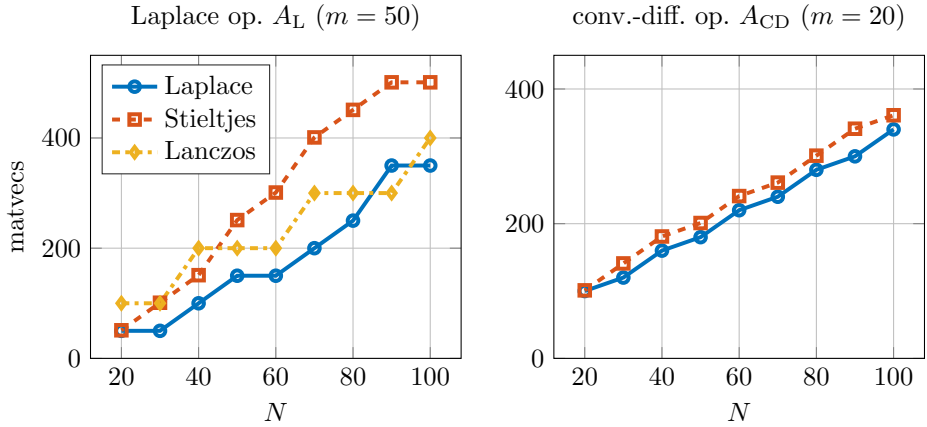
\begin{figure}\centering
\subfloat{
%
%
\begin{tikzpicture}

\begin{axis}[%
width=0.5\textwidth,
xmin=12,
xmax=108,
xlabel={$N$},
ymin=0,
ymax=550,
ylabel={matvecs},
title=\titleforLaplace,
xmajorgrids,
ymajorgrids,
legend style={at={(0.03,0.97)}, anchor=north west, legend cell align=left, align=left, draw=white!15!black}
]
\addplot [laplace1]
  table[row sep=crcr]{%
20	50\\
30	50\\
40	100\\
50	150\\
60	150\\
70	200\\
80	250\\
90	350\\
100	350\\
};
\addlegendentry{Laplace}

\addplot [stieltjes1]
  table[row sep=crcr]{%
20	51\\
30	101\\
40	151\\
50	251\\
60	301\\
70	401\\
80	451\\
90	501\\
100	501\\
};
\addlegendentry{Stieltjes}

\addplot [twopass1]
  table[row sep=crcr]{%
20	100\\
30	100\\
40	200\\
50	200\\
60	200\\
70	300\\
80	300\\
90	300\\
100	400\\
};
\addlegendentry{Lanczos}

\end{axis}
\end{tikzpicture}
\subfloat{
%
%
\begin{tikzpicture}

\begin{axis}[%
width=0.5\textwidth,
xmin=12,
xmax=108,
xlabel={$N$},
ymin=0,
ymax=450,
ylabel={\phantom{matvecs}},
title=\titleforConvdiff,
xmajorgrids,
ymajorgrids,
legend style={at={(0.03,0.97)}, anchor=north west, legend cell align=left, align=left, draw=white!15!black}
]
\addplot [laplace1]
  table[row sep=crcr]{%
20	100\\
30	120\\
40	160\\
50	180\\
60	220\\
70	240\\
80	280\\
90	300\\
100	340\\
};

\addplot [stieltjes1]
  table[row sep=crcr]{%
20	101\\
30	141\\
40	181\\
50	201\\
60	241\\
70	261\\
80	301\\
90	341\\
100	361\\
};

\end{axis}
\end{tikzpicture}
\caption{Number of matrix-vector products for approximating $A^{1/2}b$. ``Laplace'' denotes \cref{alg:restarted_laplace}, ``Stieltjes'' is \texttt{funm\_quad} for $A^{-1/2}c$ with $c=Ab$. \corrected{``Lanczos'' denotes the two-pass Lanczos method.} The restart length is $m$.}
\label{fig:sqrt_matvec}
\end{figure}

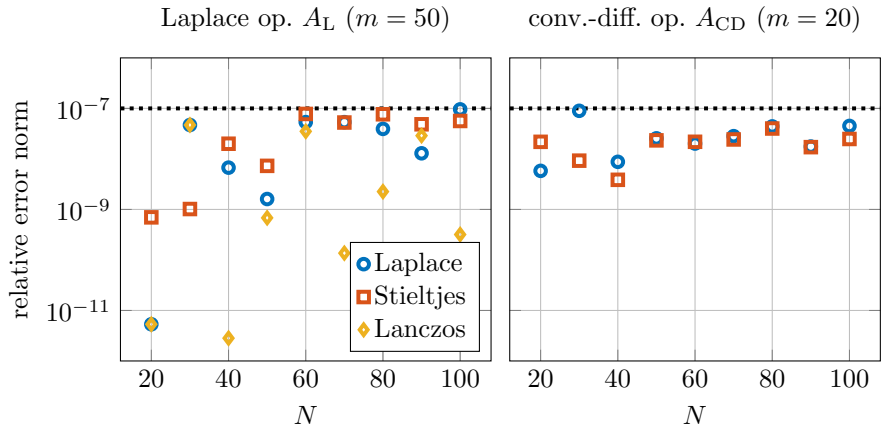
\begin{figure}\centering
\subfloat{
%
%
\begin{tikzpicture}

\begin{axis}[%
width=0.5\textwidth,
xmin=12,
xmax=108,
xlabel={$N$},
ymode=log,
ymin=1e-12,
ymax=1e-06,
ylabel={relative error norm},
title=\titleforLaplace,
xmajorgrids,
ymajorgrids,
legend style={at={(0.97,0.03)}, anchor=south east, legend cell align=left, align=left}
]
\addplot [laplace2]
  table[row sep=crcr]{%
20	5.32730584634449e-12\\
30	4.69367198004177e-08\\
40	6.71982516709966e-09\\
50	1.60720234185137e-09\\
60	5.3574813104665e-08\\
70	5.34715379750938e-08\\
80	3.9287257723336e-08\\
90	1.2910584973347e-08\\
100	9.5711742232107e-08\\
};
\addlegendentry{Laplace}

\addplot [stieltjes2]
  table[row sep=crcr]{%
20	6.96560929448992e-10\\
30	1.01816696504753e-09\\
40	1.99855082836254e-08\\
50	7.28820748948386e-09\\
60	7.77367949403069e-08\\
70	5.25364541317197e-08\\
80	7.65112361142027e-08\\
90	4.84657119897542e-08\\
100	5.64054166672059e-08\\
};
\addlegendentry{Stieltjes}

\addplot [twopass2]
  table[row sep=crcr]{%
20	5.327319841641e-12\\
30	4.69367198169612e-08\\
40	2.8246153450512e-12\\
50	6.80716087172601e-10\\
60	3.51114591990202e-08\\
70	1.36324805368665e-10\\
80	2.25326161686427e-09\\
90	2.88936411510354e-08\\
100	3.18697490103305e-10\\
};
\addlegendentry{Lanczos}

\addplot [tolline]
  table[row sep=crcr]{%
12	1e-07\\
108	1e-07\\
};
\end{axis}
\end{tikzpicture}
\subfloat{
%
%
\begin{tikzpicture}

\begin{axis}[%
width=0.5\textwidth,
xmin=12,
xmax=108,
xlabel={$N$},
ymode=log,
ymin=1e-12,
ymax=1e-06,
yticklabels={,,},
title=\titleforConvdiff,
xmajorgrids,
ymajorgrids,
legend style={at={(0.97,0.03)}, anchor=south east, legend cell align=left, align=left, draw=white!15!black}
]
\addplot [laplace2]
  table[row sep=crcr]{%
20	5.80117588096787e-09\\
30	8.96211659909018e-08\\
40	8.75609216823437e-09\\
50	2.55471232767284e-08\\
60	2.0141418190552e-08\\
70	2.8227912285824e-08\\
80	4.40077214955998e-08\\
90	1.75958309286274e-08\\
100	4.49700490905515e-08\\
};

\addplot [stieltjes2]
  table[row sep=crcr]{%
20	2.17950871839187e-08\\
30	9.23771952150424e-09\\
40	3.85944867523289e-09\\
50	2.32905835723292e-08\\
60	2.18315354883005e-08\\
70	2.42742022561312e-08\\
80	3.99850718189257e-08\\
90	1.70967750550088e-08\\
100	2.48183090259223e-08\\
};

\addplot [tolline]
  table[row sep=crcr]{%
12	1e-07\\
108	1e-07\\
};
\end{axis}
\end{tikzpicture}
\caption{Accuracy at termination when approximating $A^{1/2}b$. ``Laplace'' denotes \cref{alg:restarted_laplace}, ``Stieltjes'' is \texttt{funm\_quad} for $A^{-1/2}c$ with $c=Ab$. \corrected{``Lanczos'' denotes the two-pass Lanczos method.} The restart length is $m$.}
\label{fig:sqrt_acc}
\end{figure}

\section{Conclusions}\label{sec:conclusions}
We developed a new representation of the error function in the restarted Arnoldi method for Laplace transforms. This representation allowed us to develop a new restart algorithm based on numerical integration and spline interpolation which significantly extends the class of functions for which restarting is possible in a black-box fashion without having to choose hand-tailored contours and quadrature rules depending both on $A$ and $F$.

Due to the results in \cite{Druskin2008}, we know that the error monotonically decreases with our algorithm if $A$ is Hermitian and $f$ is nonnegative. We proposed an implementation which ensures that the run-time is dominated by matrix-vector products for larger matrices, and this has been verified in numerical experiments. Some of the newly available functions are not Stieltjes functions but can still be treated with the algorithm of \cite{FrommerGuettelSchweitzer2014a} by multiplying with $A$ or $A^{-1}$. However, our experiments also illustrated that the algorithm needs fewer matrix-vector products for similar accuracy in these cases. \corrected{For the Hermitian examples, the two-pass Lanczos approach usually---but not always---required fewer matrix-vector multiplications than the new, restarted method.} Lastly, we demonstrated that our method can be applied to two-sided Laplace transforms, like the Gamma function, and complete Bernstein functions, like the square root, too.

\section*{Acknowledgments} \corrected{We would like to thank Leonid Knizhnerman and an anonymous
referee for their constructive comments which helped improve an earlier version of the manuscript.}

\appendix
\appendixnotitle \label{sec:appendix}
This appendix shows that we can interchange the order of integration in \cref{eq:order_of_integration}, i.e., in the integral
\begin{equation*}
    \int_0^\infty \int_0^\infty f(t) \exp(-\tau A) v_{m+1} g(t-\tau) u(t-\tau) \dif{\tau} \dif{t}, 
\end{equation*}
and that if $\nu$ lies within the region of absolute convergence of $\Lp{f}$, it also lies in the region of absolute convergence of $\Lp{\tilde{f}}$, which is what we left out in the proof of \cref{thm:error_laplace}.
Fubini's theorem (see, e.g., \cite[Theorem 8.8]{Rudin}) states that we can interchange the order if
\begin{equation*}
    \mathcal{I} = \int_0^\infty \int_0^\infty \abs{f(t) \exp(-\tau A)v_{m+1} g(t-\tau) u(t-\tau)} \dif{\tau} \dif{t} < \infty.
\end{equation*}

Note that the integral $\mathcal{I}$ and the integrand are vectors, so we have to show $\mathcal{I}_i < \infty$ for each entry $\mathcal{I}_i$ of $\mathcal{I}$. However, we can treat each entry in the same manner: For every vector $x \in \Cn$, it holds true that $\abs{x_i} \leq \norm{x}$. We apply this to the integrand in $\mathcal{I}$ so that we have
\begin{align*}
    \mathcal{I}_i &\leq \int_0^\infty \int_0^\infty \norm{\exp(-\tau A)v_{m+1}} \; \abs{f(t)} \; \abs{g(t-\tau)} \; u(t-\tau) \dif{\tau} \dif{t} \\
        &\leq \int_0^\infty \int_0^t \norm{\exp(-\tau A)} \; \abs{f(t)} \; \abs{g(t-\tau)} \dif{\tau} \dif{t}
\end{align*}
for every $i=1,\dots,n$. 
Similarly, we use
\begin{equation*}
    \abs{g(t)} = \abs{\Tra{e}_m \exp(-tH_m)e_1} \leq \norm{\exp(-tH_m)e_1} \leq \norm{\exp(-tH_m)}
\end{equation*}
which results in
\begin{equation} \label{eq:I_i_inequality}
    \mathcal{I}_i \leq \int_0^\infty \int_0^t \norm{\exp(-\tau A)} \; \abs{f(t)} \; \norm{\exp(-(t-\tau)H_m)} \dif{\tau} \dif{t}.
\end{equation}
Both matrix norms $\norm{\exp(-\tau A)}$ and $\norm{\exp(-(t-\tau)H_m)}$ can now be bounded by the result of Crouzeix and Palencia \cite[Theorem~3.1]{CrouzeixPalencia}. We start with
\begin{equation*}
    \norm{\exp(-\tau A)} \leq c \max_{z\in W(A)} \abs{\exp(-\tau z)} = c \max_{z\in W(A)} \exp(-\tau \real{z}) = c \exp(-\tau\nu)
\end{equation*}
with the constant $c=1+\sqrt{2}$ and $\nu = \min_{z\in W(A)}\real{z}$, where $W(A)$ is the field of values of $A$. Since $W(H_m) \subseteq W(A)$ and $t-\tau \geq 0$, one can proceed similarly for the second matrix norm, which yields
\begin{equation*}
    \norm{\exp(-(t-\tau)H_m)} \leq c \exp(-(t-\tau)\nu).
\end{equation*}
Using both bounds in \eqref{eq:I_i_inequality}, we obtain
\begin{align*}
    \mathcal{I}_i &\leq \int_0^\infty \int_0^t c^2 \exp(-\tau\nu) \abs{f(t)} \exp(-(t-\tau)\nu) \dif{\tau} \dif{t} \\
        &= c^2 \int_0^\infty \exp(-t\nu) \abs{f(t)} \int_0^t  \dif{\tau} \dif{t} \\
        &= c^2 \Lp{t\abs{f(t)}}(\nu),
\end{align*}
where the Laplace transform $-\Lp{t\abs{f(t)}}(\nu)$ is the derivative of $\Lp{\abs{f}}(\nu)$ according to \cref{thm:laplace_analytic}. By the hypothesis of \cref{thm:error_laplace}, the value $\nu$ lies within the region of absolute convergence of $\Lp{f(t)}(z)$. Thus, $-\Lp{t\abs{f(t)}}(\nu)$ is finite and, consequently,
\begin{equation*}
     \mathcal{I}_i \leq c^2 \Lp{t\abs{f(t)}}(\nu) < \infty.
\end{equation*}
In an analogous manner, one shows that
\begin{equation*}
    \Lp{\abs{\tilde{f}}}(\nu) \leq c \Lp{t\abs{f(t)}}(\nu) < \infty,
\end{equation*}
which means that $\nu$ lies in the region of absolute convergence of $\Lp{\tilde{f}}$.

\bibliography{bib,matrixfunctions}

\end{document}